\newcounter{FNC}[page]
\def\fauxfootnote#1{{\addtocounter{FNC}{2}$^\fnsymbol{FNC}$%
     \let\thefootnote\relax\footnotetext{$^\fnsymbol{FNC}$\Magenta{#1}}}}
\numberwithin{equation}{section}
\newtheorem{theorem}{Theorem}[section]
\newtheorem{defi}[theorem]{Definition}
\newtheorem{exam}[theorem]{Example}
\newtheorem{rema}[theorem]{Remark}
\author{Stefan Forcey} \address[S. Forcey]{
    Department of Mathematics\\
    The University of Akron\\
    Akron, OH 44325-4002
    }
    \email{sf34@uakron.edu}  \urladdr{http://www.math.uakron.edu/\~{}sf34/}
\author{Logan Keefe} \address[L. Keefe]{
    Department of Mathematics\\
    The University of Akron\\
    Akron, OH 44325-4002
    }
\author{William Sands} \address[W. Sands]{
    Department of Mathematics\\
    The University of Akron\\
    Akron, OH 44325-4002
    }
\title[BME Facets]{Facets of the Balanced Minimal Evolution Polytope.}
\keywords{phylogenetics, polytope, neighbor joining, facets}
\subjclass[2000]{90C05, 52B11, 92D15}
\begin{document}

\begin{abstract}

 The balanced minimal evolution (BME)
method of creating phylogenetic trees can be formulated as a linear programming
problem, minimizing an inner product over the vertices of the BME polytope. In
this paper we undertake the project of describing the facets of this polytope.
We classify and identify the combinatorial structure and geometry (facet
inequalities) of all the facets in dimensions up to 5, and classify even more
facets in all dimensions. A full set of facet inequalities would allow a full
implementation of the simplex method for finding the BME tree--although there
are reasons to think this an unreachable goal. However, our results provide the
crucial first steps for a more likely-to-be-successful program: finding
efficient relaxations of the BME polytope.
\end{abstract}

\maketitle

\section{Introduction}

\begin{figure}[b]\centering
                  \includegraphics[width=\textwidth]{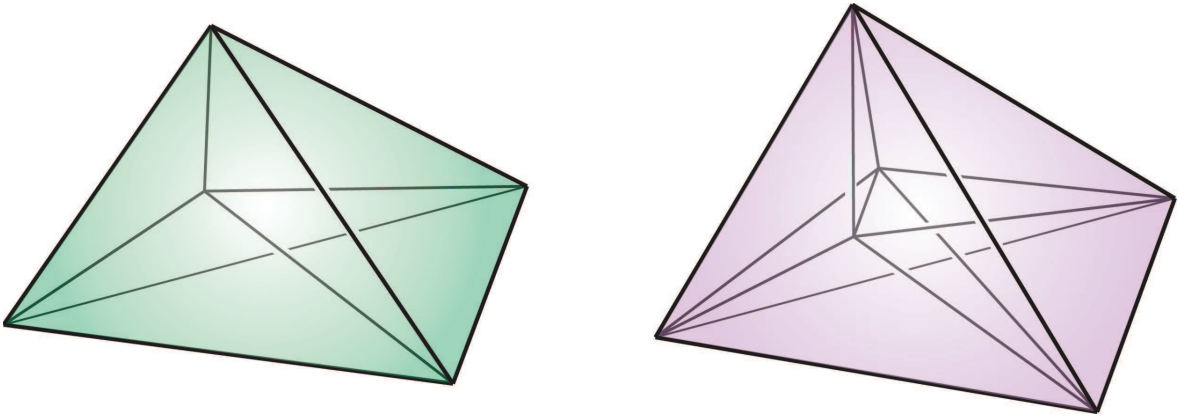}
\caption{There are two combinatorial types of facets of the
5-dimensional BME polytope, shown here as Schlegel diagrams. The
polytope on the left is the 4-simplex, and on the right is the 4d
Birkhoff polytope. The
Schlegel diagram for the latter is expanded in
Figure~\ref{f:expand}.}\label{f:facets}
\end{figure}

The goal of phylogenetics is to take a set of related items--
biological examples are usually referred to as taxa: populations,
species, individuals or genes--and to construct a branching diagram
that explains how they are related chronologically. The diagram we
will be concerned with is a binary tree with labeled leaves. In
other words, a cycle-free graph with nodes (vertices) which are
either of degree one (touching a single edge) or degree three, and
with a set of distinct items assigned to the degree one nodes--the
leaves. We study a method called \emph{balanced minimal evolution}.
This method begins with a given set of $n$ items and a symmetric (or
upper triangular) square $n\times n$ \emph{dissimilarity matrix}
whose entries are numerical dissimilarities, or distances, between
pairs of items. From the dissimilarity matrix the balanced minimal
evolution (BME) method constructs a binary tree with the $n$ items
labeling the $n$ leaves. The BME tree has the property that the
distances between its leaves most closely match the given distances
between corresponding pairs of taxa.

By ``most closely match'' in the previous paragraph we mean the
following: the reciprocals of the distances between leaves are the
components of a vector $\mathbf{c}$, and this vector minimizes the
dot product $\mathbf{c}\cdot\mathbf{d}$ where $\mathbf{d}$ is the
list of distances in the upper triangle of the distance matrix.

More precisely: Let the set of $n$ distinct species, or taxa, be
called $S.$ For convenience we will often let $S = [n] =
\{1,2,\dots,n\}.$ Let vector $\mathbf{d}$ be given, having ${n
\choose 2}$ real valued components $d_{ij}$, one for each pair
$\{i,j\}\subset S.$ There is a vector $\mathbf{c}(t)$ for each
binary tree $t$ on leaves $S,$ also having ${n \choose 2}$
components $c_{ij}(t)$, one for each pair $\{i,j\}\subset S.$ These
components are ordered in the same way for both vectors, and we will
use the lexicographic ordering: $\mathbf{d} =
\left<d_{12},d_{13},\dots,d_{1n},d_{23},d_{24},\dots,d_{n-1,n}\right>
$.

We define, following Pauplin \cite{Pauplin}:
 $$c_{ij}(t) = \frac{1}{2^l}$$
where $l$ is the number of internal nodes (degree 3 vertices) in the
path from leaf $i$ to leaf $j.$

The BME tree for the vector $\mathbf{d}$ is the binary tree $t$ that
minimizes $\mathbf{d}\cdot\mathbf{c}(t)$ for all binary trees on
leaves $S.$ The value of setting up the question in this way is that
it becomes a linear  programming problem. The convex hull of all the
vectors $\mathbf{c}(t)$ for all binary trees $t$ on $S$ is a
polytope BME$(S)$, hereafter also denoted BME($n$) or
$\mathcal{P}_n$ as in \cite{Eickmeyer} and \cite{Rudy}. The vertices
of $\mathcal{P}_n$ are precisely the $(2n-5)!!$ vectors
$\mathbf{c}(t).$ Minimizing our dot product over this polytope is
equivalent to minimizing over the vertices, and thus amenable to the
simplex method.

In Figure~\ref{f:2dbmes} we see the 2-dimensional polytope
$\mathcal{P}_4.$ In that figure we illustrate a simplifying choice
that will be used throughout: rather than the original fractional
coordinates $c_{ij}$ we will scale by a factor of $2^{n-2},$ giving
coordinates $x_{ij}=2^{n-2}c_{ij} = 2^{n-2-l}.$ Since the furthest
apart any two leaves may be is a distance of $n-2$ internal nodes,
this scaling will result in integral coordinates.

\begin{figure}[b!]\centering
                  \includegraphics[width=\textwidth]{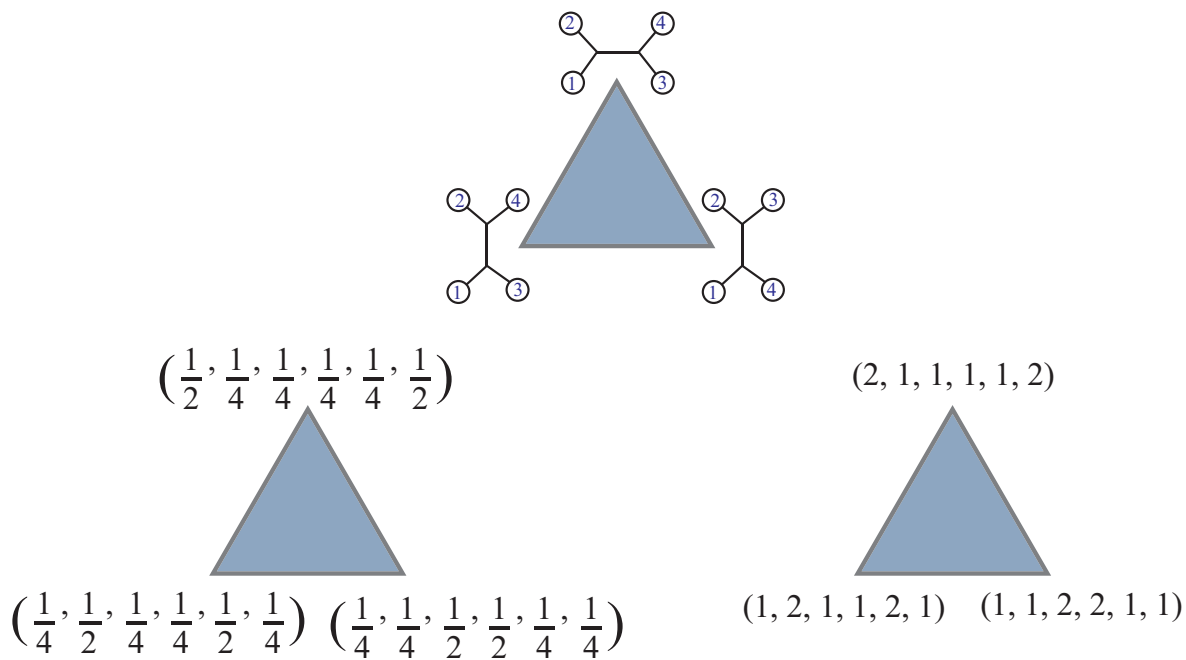}
\caption{The polytope $\mathcal{P}_4$ is a triangle. At the top we
label the vertices with the three binary trees with leaves $1\dots
4$. Each edge shows a nearest-neighbor interchange; for instance the
exchange of leaves 3 and 4 on the bottom edge. At bottom left are
Pauplin's original coordinates and at bottom right are the
coordinates, scaled by $2^{n-2}=4$, which we will
use.}\label{f:2dbmes}
\end{figure}

A \emph{clade} is a subgraph of a binary tree induced by an internal
 (degree three) node and all of the leaves descended from it in a particular
 direction. In other words: given an
 internal node $v$ we choose two of its edges and all of the leaves that
 are connected to $v$ via those two edges. Equivalently, given any
 internal edge, its deletion separates the tree into two clades. Two clades on the same tree
  must be either disjoint or \emph{nested}, one contained in the other. A
 \emph{cherry} is a clade with two leaves. We often refer to a clade
 by its set of (2 or more) leaves.  A pair of \emph{intersecting cherries} $\{a,b\}$
 and $\{b,c\}$ have intersection in one leaf $b$, and thus cannot exist both on the same tree. A \emph{caterpillar} is a tree
 with only two cherries.

\section{New Results}

A \emph{facet} of a polytope is a top-dimensional face of that
polytope's boundary, or a co-dimension-1 face. Faces of a polytope
can be of any dimension, from 0 to that of the (improper) face which
is the polytope itself. Our main results are to describe many new
faces, especially facets, of the $n^{th}$ balanced minimal evolution
polytope $\mathcal{P}_n$.

 For $n=5$ we completely classify the facets according to
combinatorial type of their vertices. There are three classes of
facet for $n=5$, which we refer to as \emph{intersecting-cherry
facets, caterpillar facets}, and \emph{cyclic-ordering facets}.
There are respectively 30, 10 and 12 of these types of facet in
$\mathcal{P}_5.$

In Theorem~\ref{th:gen_cherry} we show that any pair of intersecting
cherries corresponds to a facet of $\mathcal{P}_n.$ In
Theorem~\ref{th:cherry} we (redundantly, for demonstrative purpose)
show a special case of this facet for $n=5$ and in
Theorem~\ref{th:bcher} we show that for $n=5$ these facets turn out
to be equivalent to Birkhoff polytopes.

In Theorem~\ref{th:cat} we show that any caterpillar tree with fixed
ends corresponds to a facet of $\mathcal{P}_n.$ For $n=5$ we show in
Theorem~\ref{th:bcat} that this facet is a Birkhoff polytope.

In Theorem~\ref{th:neck} we show that, for $n=5,$ for each free
cyclic ordering of leaves there is a corresponding facet which is
combinatorially equivalent to a simplex.

 The right half of
Table~\ref{facts} summarizes these new results.
\begin{table}[hb!]
\begin{tabular}{|c|c|c|c||c|c|c|c|}
\hline
$n$ & dim. & vertices &  facets & facet inequalities & number of & number of \\
&&&&(classification)& facets &   vertices \\
&&&&&& in facet\\
\hline \hline
3 & 0 & 1 & 0 &-&-&- \\
\hline 4 & 2 & 3 & 3 & $ x_{ab}\ge 1$ & 3 & 2 \\   \cline{5-7}
&&&& $x_{ab}+x_{bc}-x_{ac} \le 2$ & 3&2\\
\hline
5 & 5 & 15 & 52 &  $x_{ab}\ge 1$ & 10&6 \\
&&&& (caterpillar)&&\\
 \cline{5-7}
&&&&$ x_{ab}+x_{bc}-x_{ac} \le 4$ & 30 & 6\\
&&&& (intersecting-cherry)&&\\
\cline{5-7} &&&&\scriptsize{ $x_{ab}+x_{bc}+x_{cd}+x_{df}+x_{fa}\le
13$} & 12 & 5\\
&&&& (cyclic ordering)&&\\
 \hline
 6 & 9 & 105 & 90262 &  $x_{ab}\ge 1$ & 15 &
24\\
&&&& (caterpillar)&&\\
  \cline{5-7}
 &&&& $~ x_{ab}+x_{bc}-x_{ac} \le 8$ &$60$ & $30$ \\
 &&&& (intersecting-cherry)&&\\
  \hline \hline
 \rule{0pt}{2.6ex}\rule[-1.2ex]{0pt}{0pt}   $n>4$ & ${n \choose 2}-n$& $(2n-5)!!$ & ? & $x_{ab}\ge 1$ &${n \choose
2}$& $(n-2)!$\\
&&&& (caterpillar)&&\\
  \cline{5-7}
 \rule{0pt}{2.6ex}\rule[-1.2ex]{0pt}{0pt}  &&&&{\scriptsize $~x_{ab}+x_{bc}-x_{ac} \le 2^{n-3}$} &${n \choose 2}(n-2)$ & $2(2n-7)!!$ \\
 &&&& (intersecting-cherry)&&\\
  \hline
\end{tabular}\caption{Stats for the BME polytopes $\mathcal{P}_n$. The first four columns are found in \cite{Huggins} and \cite{Rudy}. The inequalities are given for any $a,b,c,\dots \in [n].$ Each can be translated to an inequality in the coordinates
 $c_{ij}$ simply by dividing the right hand side by $2^{n-2}.$ For instance, when $n=4$, the second inequality becomes $c_{ab}+c_{bc}-c_{ac} \le 1/2.$ Note that for
 $n=4$ the three facets are described twice: our inequalities are redundant.\label{facts}}
\end{table}

First though, in the next section, we go over some previously
discovered facts about the edges and faces of the BME polytopes. Our
contribution there is Theorem~\ref{th:noclade}, in which we show
that clade-faces can never be facets. We also take the opportunity
to advertise future directions for the research.

\section{Edges, Clade-faces and future goals}
%

Known results about the BME
 polytope are closely related to several algorithms used to
 determine optimal phylogenetic trees. Of course with a
 reasonably small set of species or individuals one could simply
 create the entire (finite) space of
 all the possible binary trees $t$ with those species as the leaves,
 calculating the dot product $\mathbf{d}\cdot\mathbf{c}(t)$ for each one
 and then choosing the optimal tree as the one minimizing this
 product. Since this procedure would take far too long (it is NP-hard, as pointed out in \cite{Day} and \cite{Fiorini}) as soon as
 the size of the set grows beyond a certain point, we are interested
 in shortcut approaches. Two of these are the fastME algorithm and
 the neighbor joining algorithm. The former is introduced in
 \cite{fastme} and the latter is developed in \cite{Saitou}.

In \cite{Gascuel} the authors show that neighbor-joining is a greedy
algorithm for the BME method. The fastME algorithm however
 operates by searching the space of binary trees, moving from one to
 another via \emph{nearest-neighbor interchange} moves. These moves
 are illustrated by the edges of the triangle in
 Figure~\ref{f:2dbmes}. Thus one goal for further study of the BME polytope is a more complete
 description of its edges, in order to more fully realize the simplex method. In \cite{Rudy} the authors show that any
 subtree-prune-regraft move is associated to an edge in the BME
 polytope. The study of the facets of the BME polytope which we
 begin here can be seen as an alternate path to hopefully even better approximations of the simplex method.

Since the total number of facets grows so quickly (90262 for $n=6$ and beyond
our computational patience for $n=7$) and since the problem is $NP$-hard, we
doubt that a complete description of facets will be easy to find. Even if it
was found the simplex method on all these facets may be infeasible. In the
current work our stated desire to completely characterize the face structure of
the BME polytope must be taken in this light: any advances are valuable despite
the fact that we may be on an endless journey.  The value of this knowledge is
in its potential application, via the following conjecture: there is a subset
of facet inequalities of the BME polytope (as found in this paper and its
sequel) which will give us a useful relaxation of the BME polytope.

In fact we conjecture that with just a fraction of the list of facet
 and face inequalities of the BME polytope we can describe a larger,
enveloping polytope which recovers most or all of our original integral BME
vertices, plus additional vertices with detectably incorrect coordinates.
Should the conjecture hold, the inequalities we use for the relaxation could be
generated as needed in a branch-and-bound algorithm, halting when one of our
powers-of-2 vertices is returned. There is possible potential for gains in
speed over the existing algorithms, subject of course to testing.

In the sequel to this paper we describe facets and faces based on trees that
display a given split. A \emph{split} of the set $S$ of leaves for our BME
trees is a partition of $S$ into two non-empty parts, $S_1$ and $S_2$. A tree
\emph{displays} a split if $S_1$ makes up the leaves of a \emph{clade}. ($S_2$
will make up the leaves of another clade.)

In \cite{Rudy} it is
 proven that any set of disjoint clades is associated with a
 specific face of the BME polytope. The clade-faces turn out to be combinatorially equivalent to
smaller-dimensional BME polytopes. Precisely, given a collection of
$k$ clades using disjoint subsets of $S$ as leaves, the face of
$\mathcal{P}_n$ corresponding to this clade will be combinatorially
equivalent to $\mathcal{P}_{n-y+k}$ where $y$ is the total number of
leaves in the $k$ clades. That is, this face will be itself a BME
polytope, equivalent to one based on a set of $n-y+k$ leaves. The
$k$ clades play the role of leaves, since they are fixed. Any vertex
of this face can be described as a binary tree with $n$ leaves such
that all $k$ disjoint clades are present. However, these clade-faces
fail to describe any of the facets of the BME polytope.

%
%

\begin{theorem}\label{th:noclade}
If $n\ge4$ then no clade face of $\mathcal{P}_n$ is a facet of
$\mathcal{P}_n$.
\end{theorem}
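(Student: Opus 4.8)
The plan is to show that a clade-face has dimension strictly less than the dimension of $\mathcal{P}_n$ minus one, i.e.\ strictly less than the facet dimension $\binom{n}{2}-n-1$. I would use the dimension formula for clade-faces that follows from the result quoted from \cite{Rudy}: a face corresponding to $k$ disjoint clades using $y$ total leaves is combinatorially equivalent to $\mathcal{P}_{n-y+k}$, whose dimension is $\binom{n-y+k}{2}-(n-y+k)$. The key observation is that since a single clade is the only way to have the \emph{smallest} possible drop in dimension, the extreme case to analyze is one clade ($k=1$) on the fewest leaves, namely a cherry ($y=2$). I would therefore compare $\dim \mathcal{P}_{n-1} = \binom{n-1}{2}-(n-1)$ against the required facet dimension $\binom{n}{2}-n-1$.

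First I would set up the general comparison. Write $m = n-y+k$ for the effective number of leaves, so the clade-face has dimension $\binom{m}{2}-m$. A facet must have dimension $\binom{n}{2}-n-1$, so the face is a facet only if $\binom{m}{2}-m = \binom{n}{2}-n-1$. Since a clade must contain at least two leaves we have $y\ge 2k$, and since the clades are nontrivial and disjoint we have $k\le y/2$; the effective leaf count satisfies $m = n-y+k \le n-k \le n-1$. Thus $m\le n-1$, and I would argue the dimension gap is minimized (the face is largest) exactly when $m=n-1$, which forces $k=1$ and $y=2$: a single cherry.

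The heart of the argument is then the single-cherry case. Here I would simply compute: the face has dimension $\binom{n-1}{2}-(n-1)$, while a facet needs dimension $\binom{n}{2}-n-1$. The difference is
\[
\Big(\binom{n}{2}-n-1\Big)-\Big(\binom{n-1}{2}-(n-1)\Big) = \big(\tbinom{n}{2}-\tbinom{n-1}{2}\big)-n-1+(n-1) = (n-1)-2 = n-3.
\]
For $n\ge 4$ this is $n-3\ge 1>0$, so the clade-face has codimension at least $2$ and cannot be a facet. Since the single cherry is the largest clade-face, every other clade-face has even smaller dimension, completing the proof for all clade-faces when $n\ge 4$.

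The main obstacle I anticipate is justifying rigorously that the single-cherry face genuinely has the maximal dimension among all clade-faces, i.e.\ that $\binom{m}{2}-m$ is monotonically increasing in $m$ over the relevant range so that maximizing $m$ maximizes the dimension. This is straightforward since $\binom{m}{2}-m = \tfrac{m(m-3)}{2}$ is increasing for $m\ge 2$, but I would want to state carefully that $k\ge 1$ and $y\ge 2$ with $y\ge 2k$ indeed force $m\le n-1$, and handle the edge behavior (for instance confirming the formula and the combinatorial equivalence from \cite{Rudy} apply when $m$ is as small as $3$, the base BME polytope being a point). Once the monotonicity and the bound $m\le n-1$ are in hand, the codimension computation is immediate and the conclusion follows uniformly for all $n\ge 4$.
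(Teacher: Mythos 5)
Your proof is correct, but it takes a genuinely different route from the paper's. Interestingly, the paper itself remarks that one ``expects'' the theorem because the largest clade-face is the one associated to a single cherry, but then deliberately sidesteps that heuristic and gives what it calls a more general argument: it sets the clade-face dimension $\binom{n-y+k}{2}-(n-y+k)$ equal to the facet dimension $\binom{n}{2}-n-1$, reduces this to the quadratic Diophantine condition $p^2+(2n-3)p+2=0$ with $p=k-y$ a negative integer, and shows via a parity/square analysis ($4n^2-12n+1$ must be an odd square, leading to $n(n-3)=m(m-1)$) that the only solution is $n=3$. You instead make the heuristic rigorous: from $y\ge 2k$ and $k\ge 1$ you get $m=n-y+k\le n-1$, the dimension function $\binom{m}{2}-m=\tfrac{m(m-3)}{2}$ is increasing in $m$, so the cherry face ($m=n-1$) is extremal, and the explicit gap $\bigl(\binom{n}{2}-n-1\bigr)-\bigl(\binom{n-1}{2}-(n-1)\bigr)=n-3\ge 1$ finishes it. Your approach buys simplicity and a quantitative bound (every clade-face has codimension at least $2$, the cherry face exactly $n-2$), and it avoids the number-theoretic case analysis entirely; the paper's approach treats all pairs $(k,y)$ uniformly through the algebra without needing the maximality/monotonicity step, at the cost of a less transparent argument. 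Your flagged caveats (monotonicity of $\tfrac{m(m-3)}{2}$, the degenerate case $m\le 2$ where the face is a single vertex and hence certainly not a facet for $n\ge 4$) are easily discharged, so your proof stands as a complete and arguably cleaner alternative.
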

We expect this to be true since the largest dimension clade-face
would be that associated to a single cherry: the smallest clade.
Here is a proof that takes a more general approach.
\begin{proof}
 Since a face of $\mathcal{P}_n$
corresponding to a disjoint set of $k$ clades containing a total of
$y$ leaves is combinatorially equivalent itself to a smaller BME
polytope, its dimension is that of the polytope
$\mathcal{P}_{n-y+k}$.  Now,
 a facet of a BME polytope has dimension ${n \choose 2} -n -1$, for
 $n$ leaves.  Thus if a facet was described by a disjoint set of $k$
 clades
 containing a total of $y$ leaves, we could say that

$${n \choose 2} -n -1 = {n-y+k  \choose 2} -(n-y+k).$$

 This equation implies the quadratic equation $p^2 + (2n-3)p +2 = 0$ , where  $p = k-y$ must be a negative
 integer. The roots occur at  $p= -n+ \sqrt{q}\slash 2 + 3\slash 2$  where $q =
 4n^2-12n+1.$

 So for $p$ to be an integer, we need $\sqrt{q}$ to be an odd integer, so
 $q$
 is the square of an odd (positive) integer $2m-1$.

 Thus $4n^2-12n+1 = (2m-1)^2 = 4m^2 -4m  +1$     for  integer $m>0.$

 Subtracting the 1's and dividing by 4 we get:
 $$n(n-3) = m(m-1).$$

Letting $n=i+1$ for $i>1$ we see that $n(n-3) = i(i-1) - 2.$  Thus
any term $a_m$ in the sequence of integers $m(m-1)$ for $m>1$ will
always be equal to $n(n-3) +2=b_n+2$ for some $n$. Since $n(n-3)$
increases faster than by simply adding 2, the term $a_m$ in question
cannot be equal to any term after $b_n$ (nor any before, since both
sequences are increasing.)

In fact the only time that the equation can hold is for $m=1$ and
$n=3.$
\end{proof}

This negative fact of course raises the question of how to characterize and
describe the facets of $\mathcal{P}_n$. We would eventually like a complete
description, both combinatorially and geometrically. On the combinatorial side
we would like to know which sets of vertices are those of a facet, and what
other polytopes and constructions of polytopes (products, sums, pyramids,
polars) those facets are equivalent to. On the geometrical side we would like
to know how to quickly find the list of facet inequalities that describe
$\mathcal{P}_n$. In Figure~\ref{f:bme5data} we show the data for $n=5.$

\begin{figure}[b]\centering
                  \includegraphics[width=\textwidth]{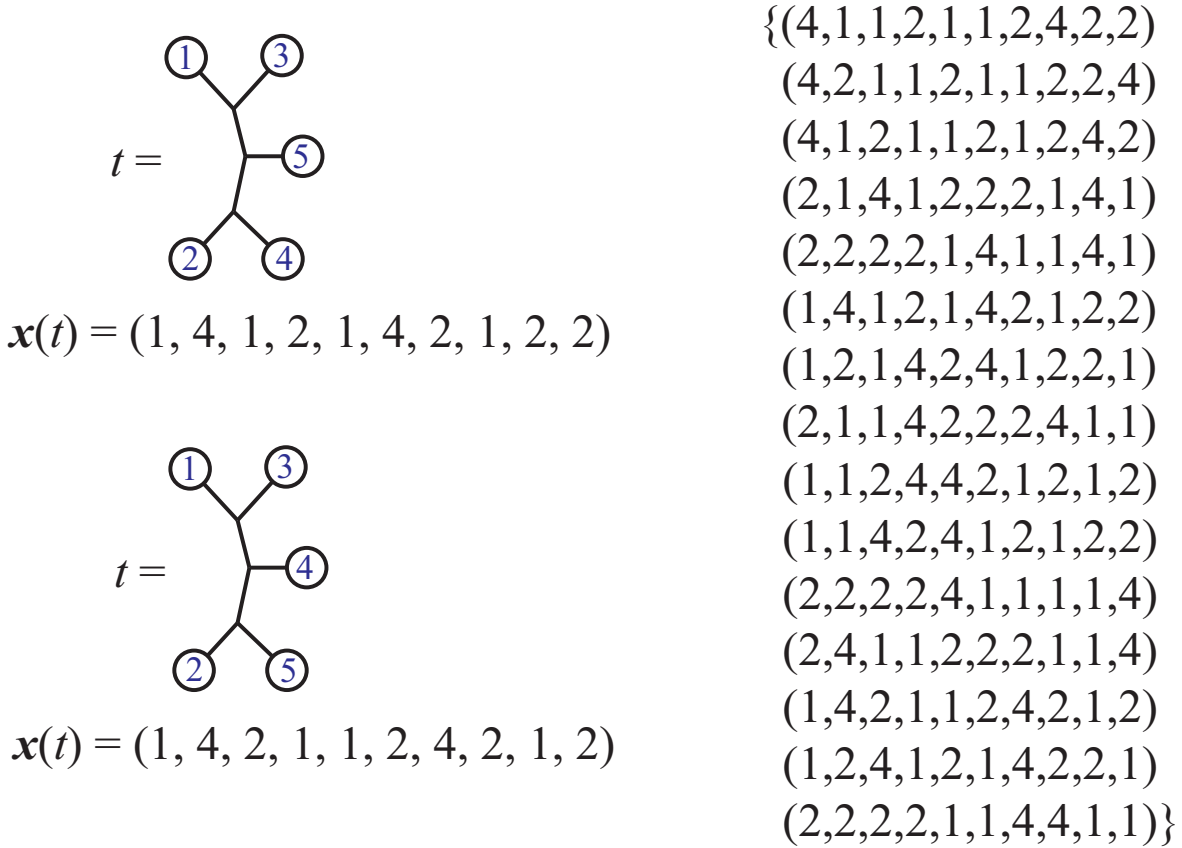}
\caption{Two sample vertex trees of $\mathcal{P}_5$ with their
respective coordinates shown beneath, followed by all 15 vertex
points calculated for n=5, and the $f$-vector for $\mathcal{P}_5$
(which gives the number of faces in each dimension starting with 0)
as found by polymake \cite{polymake}.} \label{f:bme5data}
\end{figure}

%
%

\section{Facets from intersecting cherries.}

 The first type of facet
of $\mathcal{P}_5$ that we found is associated to any pair of
elements of $S = [5],$ along with a third element chosen after the
pair. Thus there are ${5\choose 2}(3) =30$ of these facets. Each of
these facets has its set of vertices as follows:

\begin{theorem}\label{th:cherry}
For each pair of cherries with leaves $\{a,b\}$ and $\{b,c\},$ where the pair
$a,c$ and the element of intersection $b$  are three distinct elements from
$S=[5],$ there is a facet of $\mathcal{P}_5$ whose six vertices correspond to
trees that have one of the two cherries.
\end{theorem}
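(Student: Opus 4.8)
The plan is to realize this facet through the intersecting-cherry inequality recorded in Table~\ref{facts}, namely $x_{ab}+x_{bc}-x_{ac}\le 4$, and to verify three things: (i) it is valid on all fifteen vertices of $\mathcal{P}_5$, (ii) it is tight precisely on the six trees named in the statement, and (iii) those six vertices affinely span a $4$-dimensional flat, so the face they cut out has codimension one. Recall that after scaling, each coordinate of a vertex tree is $x_{ij}=2^{3-l}\in\{1,2,4\}$, where $l$ is the number of internal nodes on the $i$--$j$ path, and that on five leaves every tree is a caterpillar with exactly two cherries.

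The structural heart of the argument is one observation about cherries. If $\{a,b\}$ is a cherry, attached at an internal node $v$, then for every other leaf $c$ the paths $a$-to-$c$ and $b$-to-$c$ agree from $v$ onward; hence $l_{ac}=l_{bc}$ and so $x_{ac}=x_{bc}$. Consequently $x_{ab}+x_{bc}-x_{ac}=x_{ab}=4$, since a cherry attains the maximal coordinate value $4$ (the case $l=1$). By symmetry the same equality holds whenever $\{b,c\}$ is a cherry. This simultaneously gives tightness on the advertised trees and, in the complementary case, validity with strict slack: if neither $\{a,b\}$ nor $\{b,c\}$ is a cherry then $x_{ab}\le 2$ and $x_{bc}\le 2$, so $x_{ab}+x_{bc}\le 4$, and subtracting $x_{ac}\ge 1$ forces $x_{ab}+x_{bc}-x_{ac}\le 3<4$. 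Thus the inequality is valid, and equality holds exactly when one of the two intersecting cherries is present.

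Next I would count the equality-vertices. A caterpillar on $[5]$ is determined by its middle leaf together with its two end cherries; fixing the cherry $\{a,b\}$ leaves three choices for which of the remaining leaves sits in the middle, giving three trees, and likewise three trees carry the cherry $\{b,c\}$. Since no tree can contain both intersecting cherries, these two families are disjoint, yielding exactly six vertices as claimed. It then remains to certify that the supporting face is genuinely a facet and not a lower face: for this I would write out the ten-entry coordinate vectors of the six trees and check that the five difference vectors based at one of them have rank $4=\dim\mathcal{P}_5-1$, equivalently that five of the six vertices are affinely independent.

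The main obstacle is precisely this last dimension count: the validity and tightness steps are clean combinatorial facts, but ruling out that the six equality-vertices lie on a lower face requires an explicit rank computation (or a structural substitute). A cleaner substitute, which also foreshadows Theorem~\ref{th:bcher}, is to match the six vertices with the six permutation matrices of the $3\times 3$ Birkhoff polytope: that polytope has dimension $(3-1)^2=4$ and exactly $3!=6$ vertices, so producing such an identification would pin down both the vertex count and the facet dimension simultaneously.
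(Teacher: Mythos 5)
Your validity, tightness, and vertex-count arguments coincide with the paper's: the same supporting inequality $x_{ab}+x_{bc}-x_{ac}\le 4$, the same observation that a cherry $\{a,b\}$ forces $x_{ac}=x_{bc}$ (so equality holds with $x_{ab}=4$), and the same bounds $x_{ab}\le 2$, $x_{bc}\le 2$, $x_{ac}\ge 1$ giving strict inequality for every other tree. Up to that point you have correctly shown that the six named trees are exactly the set of vertices on which the inequality is tight, i.e.\ that they form a face.

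The gap is the step you yourself flag as the main obstacle: certifying that this face has dimension $4$. You offer two ways to close it, and neither goes through as written. The rank computation on the six explicit coordinate vectors would indeed work (and, by the coordinate-permutation symmetry, one representative pair of cherries suffices), but you only promise it; the proof stops exactly where the facet property --- the content of the theorem beyond ``face'' --- would be established. Your proposed ``cleaner substitute'' is not sound as stated: matching the six vertices bijectively with the six permutation matrices of $B(3)$ is a bijection of finite sets and carries no geometric information, so it cannot pin down the dimension. Six points lying in a hyperplane can affinely span anything from a segment up to a $4$-flat; to transfer the dimension of $B(3)$ you would need the identification to be realized by an affine isomorphism, or by a verified isomorphism of face lattices, which is precisely what needs proving. (Indeed the paper proves the combinatorial equivalence with the Birkhoff polytope, Theorem~\ref{th:bcher}, only \emph{after} the facet property is known, by checking that vertex-facet incidences are preserved.) The paper closes the dimension gap differently: it exhibits an explicit flag of length $5$ inside the face --- a vertex, an edge, the triangular clade-face of the cherry $\{b,c\}$, then a $3$-dimensional subface cut out by the supporting equality $x_{bd}-x_{cd}=0$ --- which forces the face to have dimension at least, hence exactly, $4$. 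To complete your route, either carry out the rank-$4$ check on explicit coordinates, or adopt a flag argument of this kind; the latter has the advantage that it generalizes to all $n$, as in Theorem~\ref{th:gen_cherry}.
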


Figure~\ref{f:intersect_facet_abc} shows the geometry of an
\emph{intersecting-cherry facet} of $\mathcal{P}_5$.

\begin{figure}[b]\centering
                  \includegraphics[width=\textwidth]{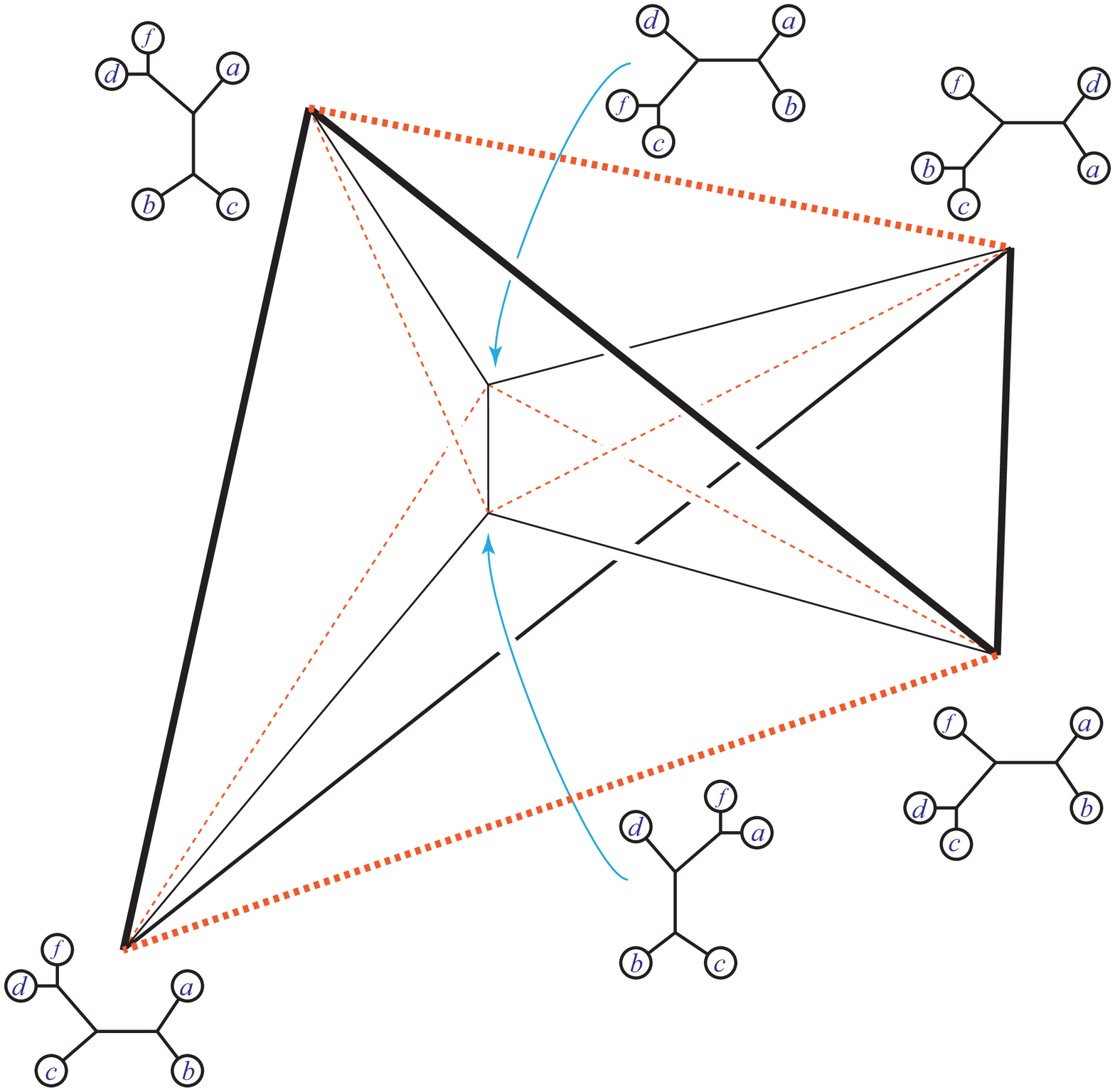}
\caption{A generic facet of $\mathcal{P}_5$ with each vertex labeled
by a tree which contains one of two intersecting cherries:
 $\{a,b\}$ and $\{b,c\}$. The dashed edges outline the clade-faces (triangles) associated with those two cherries.}\label{f:intersect_facet_abc}
\end{figure}

\begin{proof}
There are six total vertices since given one of the pair of cherries
there are 3 trees which have that cherry, since there are 3 elements
to choose from to make the lone leaf. To show that these six
vertices are the vertices of a face we need to find a linear
inequality satisfied by all the vertices of $\mathcal{P}_5$ which
becomes an equality only for the specified six vertices. Then to
show that the face is a facet we need to show that its dimension is
one less than the dimension of the entire polytope $\mathcal{P}_5$.
First we show that the trees which have either a cherry with leaves
$\{a,b\}$ or with leaves $\{b,c\}$ have associated points obeying:
$$x_{ab}+x_{bc}-x_{ac}=4.$$

This equation holds for our trees since if $\{a,b\}$ is the cherry
then $x_{ab}=4$ and $x_{ac} = x_{bc}.$ Likewise if $\{b,c\}$ is the
cherry then $x_{bc}=4$ and $x_{ac} = x_{ab}.$

Now we need to show that for any vertex that has neither of our pair
of cherries, then that vertex satisfies:
$$x_{ab}+x_{bc}-x_{ac}<4.$$

This inequality holds since having neither cherry with leaves
$\{a,b\}$ nor with leaves $\{b,c\}$ implies that $x_{ab} \le 2$ and
$x_{bc} \le 2$, while we know that $x_{ac}\ge 1$.

To see that our face is a 4-dimensional facet, we show that it
contains a \emph{flag} of subfaces (sequence of faces each contained
in its successor) which is of length 5. We can proceed starting with
any vertex and edge, since when $n=4$ any pair of vertices have an
edge between them. Our flag chosen for the purposes of this proof is
shown in Figure~\ref{f:intersect_facet_flag}, left to right with the
vertex and edge first. We choose any vertex, but then choose an edge
which connects that vertex with another that shares with the first
one of our special cherries, say $\{b,c\}.$

Next, the dimension 2 subface in our flag is formed by adding the
third vertex that also contains the cherry $\{b,c\}.$ These three
vertices form a clade face--the clade is the cherry.

The dimension 3 subface is found by adding a fourth vertex whose
tree has both cherries $\{a,b\}$ and $\{c,f\}$. Together these four
make a face: all four points obey the equation $x_{bd} - x_{cd} =0.$
The last two remaining trees in the facet are forced to obey $x_{bd}
- x_{cd} <0.$

\begin{figure}[b]\centering
                  \includegraphics[width=\textwidth]{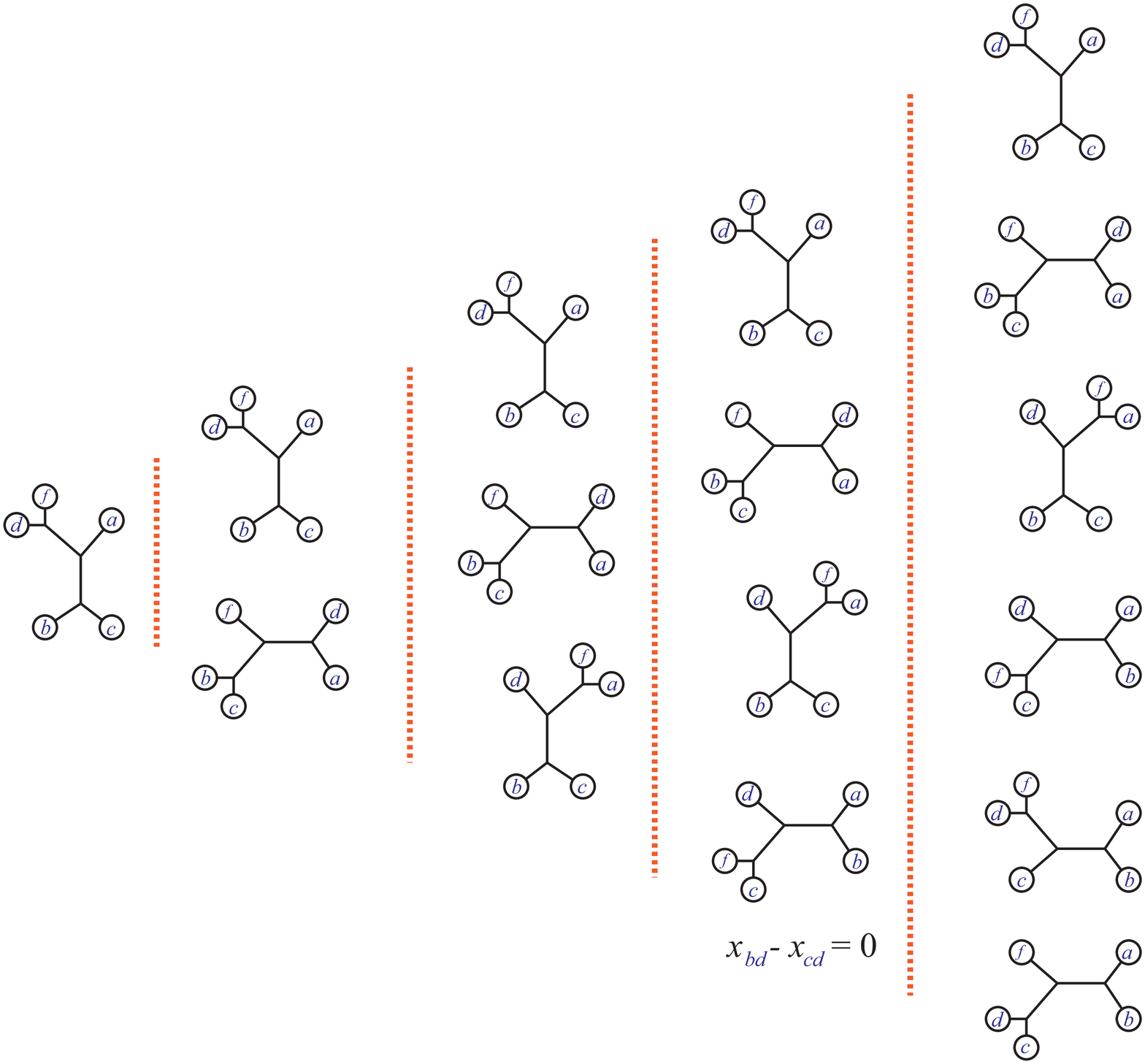}
\caption{Left to right these columns show the sets of trees in faces
that form a flag of a facet of $\mathcal{P}_5$ based on  two
intersecting cherries.}\label{f:intersect_facet_flag}
\end{figure}

\end{proof}

Consider an $n \times n$ matrix as a vector with $n^2$ components.
Taking the convex hull of the $n!$ permutation matrices gives a
polytope known as $B_n$, or $B(n)$, the \emph{Birkhoff} polytope, or
assignment polytope, of order $n$. This polytope has dimension
$(n-1)^2$, and appears in many situations, as seen in
\cite{Billera}. Here it appears again:

\begin{theorem}\label{th:bcher}
The intersecting-cherry facets of $\mathcal{P}_5$ are
combinatorially equivalent to the Birkhoff polytope of dimension 4.
\end{theorem}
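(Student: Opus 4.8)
The plan is to show that the six vertices of an intersecting-cherry facet, which we already know from Theorem~\ref{th:cherry} form a $4$-dimensional polytope, are in fact the vertices of $B_4$ by exhibiting an explicit bijection with the $3!=6$ permutation matrices that respects the affine structure. The Birkhoff polytope $B_4=B(3)$ has dimension $(3-1)^2=4$, matching the facet dimension, and has exactly $3!=6$ vertices, matching the six trees---so the vertex counts and dimensions already agree, and the task is purely to match the combinatorics. First I would fix the cherries $\{a,b\}$ and $\{b,c\}$ and label the remaining two leaves, say $d$ and $f$, so that $S=\{a,b,c,d,f\}$. The three trees carrying the cherry $\{a,b\}$ are distinguished by which of $c,d,f$ sits as the ``lone leaf'' attached next to the cherry, and likewise for $\{b,c\}$; this gives a natural way to index the six trees by a choice of cherry together with a choice of adjacent leaf.

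The key step is to produce the bijection to $S_3$. I would set up a $3\times 3$ array whose rows are indexed by the three ``outer'' leaves $\{a,c,d\}$ (or some symmetric choice adapted to the cherry structure) and whose columns are indexed by $\{c,d,f\}$, and then verify that each of the six vertex trees corresponds to exactly one permutation matrix. The cleanest route is probably to compute the $5$-dimensional coordinate vectors $\mathbf{x}(t)$ for all six trees explicitly (using the scaling $x_{ij}=2^{n-2-l}=2^{3-l}$ from the $n=5$ setup), restrict to the affine hull, and then display an affine isomorphism carrying these six points to the six permutation matrices of $B(3)$. Since an affine map taking the vertices of one polytope bijectively to the vertices of another is automatically a combinatorial (indeed affine) equivalence, this suffices.

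Concretely I would argue as follows. A permutation matrix in $B(3)$ is determined by a bijection on a $3$-element set, and such a bijection decomposes into the identity-type class and the derangement/transposition classes; the face lattice of $B(3)$ is well understood (it is a $4$-polytope with six vertices whose facets are the four triangles and three quadrilaterals, or whatever the correct count is). I would confirm that the edges of the facet---which by the flag argument in the proof of Theorem~\ref{th:cherry} come from nearest-neighbor interchanges and shared-cherry relations---match the edge graph of $B(3)$ under my bijection. In practice this is the step where one verifies that two vertex trees are adjacent in $\mathcal{P}_5$ precisely when the corresponding permutations differ by a single transposition in the appropriate sense.

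The main obstacle I expect is not the dimension or vertex count, which match trivially, but certifying that the \emph{full face lattice} agrees rather than merely the vertices. Two $4$-polytopes with six vertices need not be combinatorially equivalent, so it is not enough to biject vertices---I must check that collinearity, coplanarity, and higher incidences all correspond. The safest way to avoid a delicate face-by-face comparison is to promote the vertex bijection to a genuine \emph{affine} isomorphism between the two affine hulls, because an affine isomorphism of the ambient spaces that maps vertex set to vertex set automatically induces an isomorphism of the entire face lattice. So the crux of the argument reduces to writing down the six coordinate vectors, writing down the six permutation matrices, and exhibiting the (unique, up to relabeling) affine map between them---after which combinatorial equivalence is immediate.
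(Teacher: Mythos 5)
Your plan hinges on one step: promoting the vertex bijection to a genuine affine isomorphism between the intersecting-cherry facet and the $4$-dimensional Birkhoff polytope $B(3)$, ``after which combinatorial equivalence is immediate.'' That step cannot be carried out, because the two polytopes are combinatorially equivalent but \emph{not} affinely equivalent. Both are $4$-polytopes with $6=4+2$ vertices, so each has a one-dimensional space of affine dependencies among its vertices, and an affine map sends any dependence $\sum_i \lambda_i V_i = 0$, $\sum_i \lambda_i = 0$, to a dependence on the image points \emph{with the same coefficients}. Hence the multiset of absolute values of the dependence coefficients, up to one overall scalar, is an affine invariant. For $B(3)$ the dependence is
$$I + C + C^2 - T_{12} - T_{13} - T_{23} = 0,$$
with coefficient vector $(1,1,1,-1,-1,-1)$ (even permutations minus odd permutations equals zero, since each sum is the all-ones matrix). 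For the facet on cherries $\{a,b\},\{b,c\}$ with remaining leaves $d,f$, write $U_1,U_2,U_3$ for the trees with cherry $\{a,b\}$ and middle leaf $c,d,f$ respectively, and $U_4,U_5,U_6$ for the trees with cherry $\{b,c\}$ and middle leaf $a,d,f$ respectively. Computing the scaled coordinates (with $(a,b,c,d,f)=(1,2,3,4,5)$ in lexicographic order, e.g. $U_1=(4,2,1,1,2,1,1,2,2,4)$) one checks directly that
$$3U_1 + U_5 + U_6 \;=\; 3U_4 + U_2 + U_3,$$
for instance in the coordinate $x_{ab}$: $3\cdot 4+1+1 = 14 = 3\cdot 2+4+4$; in $x_{ad}$: $3\cdot 1+2+4 = 9 = 3\cdot 2+2+1$. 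So the facet's dependence vector is $(3,-1,-1,-3,1,1)$, and since $\{3,3,1,1,1,1\}$ is not proportional to $\{1,1,1,1,1,1\}$, no affine bijection of vertex sets exists, no matter which of the $6!$ pairings you choose. You half-anticipated trouble---your caution that a bare vertex bijection proves nothing was exactly right---but your fallback (a face-by-face incidence check) is never actually carried out, and your recollection of the target polytope is wrong: $B(3)$ has nine facets, all tetrahedra, not ``four triangles and three quadrilaterals.''

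The gap is repairable, and the repair stays close to the dependence you would have computed anyway. For a $d$-polytope with $d+2$ vertices the combinatorial type is governed by the one-dimensional Gale diagram, i.e.\ by the \emph{signs} of the affine dependence: a subset of vertices is a face exactly when its complement contains at least one vertex with positive coefficient and one with negative coefficient. Both dependence vectors above have sign pattern $(+,+,+,-,-,-)$ with no zeros, so any bijection carrying $\{U_1,U_5,U_6\}$ to $\{I,C,C^2\}$ and $\{U_2,U_3,U_4\}$ to $\{T_{12},T_{13},T_{23}\}$ induces an isomorphism of face lattices; in particular both polytopes have exactly nine facets, each a tetrahedron on four vertices. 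This proves the theorem honestly and by hand. It is also genuinely different from the paper's own proof, which is computational: polymake verifies the vertex--facet incidences for one specific intersecting-cherry facet, and invariance of the construction under permuting coordinates transfers the conclusion to all thirty such facets.
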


\begin{proof}
This fact was first verified by polymake for a specific
intersecting-cherry facet, where the isomorphism of vertices can be
seen as preserving vertex-facet incidence. Since all the
intersecting-cherry facets are combinatorially equivalent by a
common permutation of the coordinates, checking one is sufficient
for checking all. To see the isomorphism compare the two Schlegel
diagrams: from Figure~\ref{f:intersect_facet_abc} and from
Figure~\ref{f:birkhoff-iso}.

\end{proof}

We leave open for future study the question of how one might
determine the general isomorphism between an intersecting-cherry
facet and the 4d-Birkhoff polytope. It would also be quite
interesting to see if the relationship extends to higher dimensions.
For now we only say the following for the general case of
$\mathcal{P}_n$:

\begin{theorem}\label{th:gen_cherry}
 Each pair of intersecting cherries from $S =[n]$
corresponds to a certain facet of the polytope $\mathcal{P}_n$.
\end{theorem}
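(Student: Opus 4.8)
The plan is to mirror the $n=5$ argument of Theorem~\ref{th:cherry}: produce an explicit facet-defining inequality, pin down its equality set, and then show that this set has full codimension one. First I would propose the inequality
\[
x_{ab}+x_{bc}-x_{ac}\le 2^{n-3},
\]
and verify that equality holds exactly for the trees carrying one of the two cherries. If $\{a,b\}$ is a cherry then $a$ and $b$ are equidistant from every other leaf, so $x_{ab}=2^{n-3}$ and $x_{ac}=x_{bc}$, giving equality; symmetrically for $\{b,c\}$. If neither $\{a,b\}$ nor $\{b,c\}$ is a cherry, the $a$--$b$ and $b$--$c$ paths each use at least two internal nodes, so $x_{ab},x_{bc}\le 2^{n-4}$, while $x_{ac}\ge 1$; hence the left side is at most $2^{n-4}+2^{n-4}-1=2^{n-3}-1$. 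This identifies the vertices of the face $F=\mathcal P_n\cap\{x_{ab}+x_{bc}-x_{ac}=2^{n-3}\}$ as precisely the $2(2n-7)!!$ trees containing one of the two cherries, matching the count in Table~\ref{facts}.

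It then remains to show $\dim F=\binom n2-n-1$. Write $C_{ab}$ and $C_{bc}$ for the sets of trees containing the respective cherries, so that $F$ is the convex hull of $C_{ab}\cup C_{bc}$. Each of $C_{ab}$ and $C_{bc}$ is a clade-face, hence combinatorially a copy of $\mathcal P_{n-1}$ (collapse the cherry to a single leaf), of dimension $\binom{n-1}2-(n-1)$. A direct computation shows that $\binom n2-n-1$ exceeds this by exactly $n-3$, so the task reduces to proving that passing from the affine span of $C_{ab}$ to the affine span of $C_{ab}\cup C_{bc}$ raises the dimension by precisely $n-3$.

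To accomplish this I would extend a maximal flag of $C_{ab}$ (which exists with top dimension $\binom{n-1}2-(n-1)$, since $C_{ab}\cong\mathcal P_{n-1}$) by $n-3$ further steps, each adjoining a vertex drawn from $C_{bc}$. The certificate of independence is the family of linear functionals $\ell_i=x_{ai}-x_{bi}$ for $i\in[n]\setminus\{a,b,c\}$: every vertex of $C_{ab}$ has $x_{ai}=x_{bi}$, so each $\ell_i$ vanishes on the affine span of $C_{ab}$, whereas on a tree in $C_{bc}$ the value of $\ell_i$ is governed by where leaf $a$ attaches relative to leaf $i$ in the collapsed tree. The main obstacle is to choose $n-3$ trees $t_1,\dots,t_{n-3}\in C_{bc}$ so that the matrix $(\ell_i(t_j))$ is triangular with nonzero diagonal; this guarantees that each $t_j$ escapes the span accumulated so far, yielding the needed $n-3$ new directions and hence $\dim F=\binom n2-n-1$. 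Establishing that such trees exist \emph{for all} $n$—rather than exhibiting a single flag as in the $n=5$ case—is where the combinatorial work lies, and I expect it to require an explicit, uniform placement of the leaves $a$ and $i$ within the $(n-1)$-leaf trees obtained by collapsing the cherry $\{b,c\}$.
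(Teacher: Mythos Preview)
Your proposal is sound and shares its skeleton with the paper's argument: both verify the inequality $x_{ab}+x_{bc}-x_{ac}\le 2^{n-3}$ with the same equality analysis, and both reduce the dimension computation to bridging the $(n-3)$-step gap between the clade-face $C_{ab}\cong\mathcal P_{n-1}$ and the putative facet. The executions diverge at that point. The paper builds a descending chain of genuine sub-faces $P_{abc}\supset P_{abc,f}\supset P^1_{abc,f}\supset\cdots\supset P^{n-5}_{abc,f}\supset C_{ab}$, each cut out by an explicit supporting hyperplane; checking that every inclusion is strict takes a fair amount of case work, including an induction on $n$ with two subcases. Your route instead asks only for $n-3$ directions affinely independent from $C_{ab}$, certified by the functionals $\ell_i=x_{ai}-x_{bi}$ that vanish on $C_{ab}$.

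The obstacle you flag---finding $t_1,\dots,t_{n-3}\in C_{bc}$ with $(\ell_i(t_j))$ nonsingular---is genuine but resolves more easily than you suggest: rather than engineer a triangular system, show directly that no nontrivial combination $\sum_i\alpha_i\ell_i$ vanishes identically on $C_{bc}$. Collapsing the cherry $\{b,c\}$ to a leaf $*$ turns $\ell_i(t)$ into $2x'_{ai}-x'_{*i}$ in $\mathcal P_{n-1}$-coordinates, and since the only affine relations on the vertices of $\mathcal P_{n-1}$ are the row sums $\sum_{k\neq j}x'_{jk}=2^{n-3}$, a short coefficient comparison forces all $\alpha_i=0$. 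This is arguably cleaner than the paper's flag construction, though the paper's version yields more structural information (an actual flag of faces with explicit inequalities). Two small caveats: the sets obtained by adjoining vertices of $C_{bc}$ to $C_{ab}$ are not in general faces of $\mathcal P_n$, so your ``flag'' is only a chain of affine spans---harmless for the dimension count, but worth saying; and you should close by noting that $\dim F\le{n\choose 2}-n-1$ because $F$ is a proper face.
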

\begin{proof}
The trees which have either a cherry with leaves $\{a,b\}$ or with
leaves $\{b,c\}$ have associated points obeying:
$$x_{ab}+x_{bc}-x_{ac}=2^{n-3}.$$

This equation holds for our trees since if $\{a,b\}$ is the cherry
then $x_{ab}=2^{n-3}$ and $x_{ac} = x_{bc}.$ Likewise if $\{b,c\}$
is the cherry then $x_{bc}=2^{n-3}$ and $x_{ac} = x_{ab}.$

For any vertex that has neither of our pair of cherries, then that
vertex satisfies:
$$x_{ab}+x_{bc}-x_{ac}<2^{n-3}.$$

This inequality holds since having neither cherry with leaves
$\{a,b\}$ nor with leaves $\{b,c\}$ implies that $x_{ab} \le
2^{n-4}$ and $x_{bc} \le 2^{n-4}$, while we know that $x_{ac}\ge 1$.

So far we have shown that the collection of trees with either cherry
$\{a,b\}$ or $\{b,c\}$ forms a face $P_{abc}$ with inequality
$x_{ab}+x_{bc}-x_{ac}\le2^{n-3}$. Next we show that the face
$P_{abc}$ is in fact a facet, of dimension ${n \choose 2} - n -1.$
The strategy is to show existence of a flag of $\mathcal{P}_n$
beginning with $P_{abc}$ and ending with a single vertex, which has
total length ${n \choose 2} - n.$ We start with a chain of sub-faces
of $P_{abc}$ which has length $n-3$, including $P_{abc}$ itself.
Then we show a final sub-face which has dimension ${n-1 \choose 2}
-(n-1)$. Thus the entire flag is of length ${n-1 \choose 2}-(n-1) +
1+ n-3 = {n \choose 2}-n.$ We have illustrated this flag in
Figure~\ref{f:int_flag}.

After $P_{abc}$ the largest face in our flag is the one whose vertices are
described as each corresponding to a tree that has either the cherry $\{a,b\}$
or has both the cherry $\{b,c\}$ and the cherry $\{a,f\}$. We call this face
$P_{abc,f}.$ The vertices of $P_{abc,f}$ obey the equality:
$$x_{bc}+x_{bf}-x_{ac}-x_{af}= 0.$$ The trees with cherry $\{a,b\}$ have equal
distances from those two leaves, while the remaining trees have coordinates
$x_{bc}=x_{af}$ and $x_{bf}=x_{ac}$. Trees that have cherry $\{b,c\}$ but not
cherry $\{a,f\}$ obey the inequality: $$x_{bc}+x_{bf}-x_{ac}-x_{af}> 0.$$ This
is true since the tree cannot have the cherry $\{a,c\}$ either, so $x_{ac} +
x_{af}\le 2^{n-3}.$

Next, given an ordering $(y_1, y_2,\dots,y_{n-4})$ of the leaves not including
$a,b,c$ or $f,$ there is a sequence of $n-5$ sub-faces called $P^1_{abc,f},
P^2_{abc,f},\dots,P^{n-5}_{abc,f}.$ The face $P^k_{abc,f}$ has vertices
described as each corresponding to a tree that has either the cherry $\{a,b\}$
or is a caterpillar that has both the cherry $\{a,f\}$, at one end, and the
cherry $\{b,c\}$ at the other. Between the cherries in the caterpillar are the
leaves $y_1,\dots,y_{k}$ in that order beginning closest to the cherry
$\{a,f\}$. The remaining leaves $y_j$ for $j>k$ fill in the caterpillar in any
order; note that there are $n-5$ such collections since the last leaf is
determined when we reach $P^{n-5}_{abc,f}.$ See Figure~\ref{f:int_flag}.

The vertices of $P^k_{abc,f}$ obey the linear equality:
$$(2^{n-3}-1)x_{ay_k} - (2^{n-3}-1)x_{by_k}+ (2^{n-3-k}-2^k)x_{ab} = (2^{n-3-k}-2^k)2^{n-3}, $$
or, more conveniently,
$$(2^{n-3}-1)(x_{ay_k}-x_{by_k})= (2^{n-3} - x_{ab})(2^{n-3-k}-2^k).$$
The equality is clear for trees that have the cherry $\{a,b\}$, since it
becomes $0=0$. For caterpillar trees of the face $P^k_{abc,f}$ we have $x_{ab}
=1$ and $x_{ay_k}-x_{by_k}= 2^{n-3-k}-2^k.$

We need to show that for trees in $P_{abc,f}$ that are not in $P^1_{abc,f}$ we
have the inequality:
$$(2^{n-3}-1)(x_{ay_1}-x_{by_1})< (2^{n-3} - x_{ab})(2^{n-3-1}-2^1).$$

There are two cases:

Case 1) If the tree is a caterpillar, with leaf $y_1$ more than two nodes from
leaf $a$, then the inequality follows from $x_{ab} = 1$ and
$x_{ay_1}-x_{by_1}<2^{n-4}-2.$

Case 2) If the tree is not a caterpillar, we show the inequality by induction
on the number $n$ of leaves.  We check the base case $n=6,$ where the
inequality becomes $0 < (2^3-2)(2^2-2).$

Assuming the inequality for $m<n$, then in the case for $n$ leaves we choose a
cherry and replace it with a single leaf.
There are two subcases of Case 2.

\emph{Subcase (i)}: If leaf $y_1$ was in the chosen
cherry then we call the replacement leaf $y_1$ instead, and by induction we
have the inequality:
$$(2^{n-1-3}-1)(x_{ay_1}-x_{by_1})< (2^{n-1-3} - x_{ab}/2)(2^{n-1-4}-2).$$
where the values for the coordinates mentioned are the same as in the
$n$-leaved tree before replacement.

Multiplying by 2, we get:

 $(2^{n-3}-2)(x_{ay_1}-x_{by_1})< 2(2^{n-1-3} -
x_{ab}/2)(2^{n-5}-2)$

$= (2^{n-3} - x_{ab})(2^{n-5}-2).$

Expanding on the left and then adding to both sides gives:

$(2^{n-3}-1)(x_{ay_1}-x_{by_1})< (2^{n-3} - x_{ab})(2^{n-5}-2)
+x_{ay_1}-x_{by_1} $

Expanding on the right via $2^{n-5} = 2^{n-4} - 2^{n-5},$ we get:

$(2^{n-3}-1)(x_{ay_1}-x_{by_1})< (2^{n-3} - x_{ab})(2^{n-4}-2) -
2^{n-5}(2^{n-3} - x_{ab}) +x_{ay_1}-x_{by_1} $

$= (2^{n-3} - x_{ab})(2^{n-4}-2) - (2^{n-4})^2 +2^{n-4}x_{ab}/2)
+x_{ay_1}-x_{by_1} $

 Using the facts that, for our
non-caterpillar tree, we know $x_{ay_1}-x_{by_1} \le 2^{n-4}-2$ and $x_{ab} \le
 2^{n-4}$, we get:

$(2^{n-3}-1)(x_{ay_1}-x_{by_1})< (2^{n-3} - x_{ab})(2^{n-4}-2) -
(\frac{1}{2}(2^{n-4})^2 - 2^{n-4} +2). $

The last term is a polynomial in $2^{n-4}$ whose minimum is $3/2$ when $n=4.$ Thus it can be dropped to achieve the desired inequality.

\emph{Subcase (ii)}
If leaf $y_1$ is not in the chosen
cherry then by induction we
have the inequality:
$$(2^{n-1-3}-1)(x_{ay_1}/2-x_{by_1}/2)< (2^{n-1-3} - x_{ab}/2)(2^{n-1-4}-2).$$
where the values for the coordinates mentioned are the same as in the
$n$-leaved tree before replacement.

Multiplying by 4, we get:

 $(2^{n-3}-2)(x_{ay_1}-x_{by_1})< (2^{n-3} -
x_{ab})(2^{n-4}-4)$

Expanding on the left and then adding to both sides gives:

$(2^{n-3}-1)(x_{ay_1}-x_{by_1})< (2^{n-3} - x_{ab})(2^{n-4}-2) -2(2^{n-3} - x_{ab})
+x_{ay_1}-x_{by_1} $

 Using the facts that, for our
non-caterpillar tree, we know $x_{ay_1}-x_{by_1} \le 2^{n-4}-2$ and $x_{ab} \le
 2^{n-4}$, we get:

$(2^{n-3}-1)(x_{ay_1}-x_{by_1})< (2^{n-3} - x_{ab})(2^{n-4}-2) -
(2^{n-2}-2^{n-3} -2^{n-4}+2)$

$=(2^{n-3} - x_{ab})(2^{n-4}-2) -
(4(2^{n-4})-2(2^{n-4}) -2^{n-4}+2)$

$=(2^{n-3} - x_{ab})(2^{n-4}-2) -
(2^{n-4}+2).$

Since the last term is greater than 2, it can be dropped to achieve the desired inequality.

Next we need to check that for trees in $P^k_{abc,f}$ that are not in $P^{k+1}_{abc,f}$ we
have the inequality:
$$(2^{n-3}-1)(x_{ay_k}-x_{by_k})< (2^{n-3} - x_{ab})(2^{n-3-k}-2^k).$$

This inequality is straightforward, since $x_{ab} =1$ and since the leaf $y_k$ is forced to be closer
to leaf $b$ and further from leaf $a$, if it is not in the $k^{th}$ position.

\begin{figure}[b]\centering
                  \includegraphics[width=\textwidth]{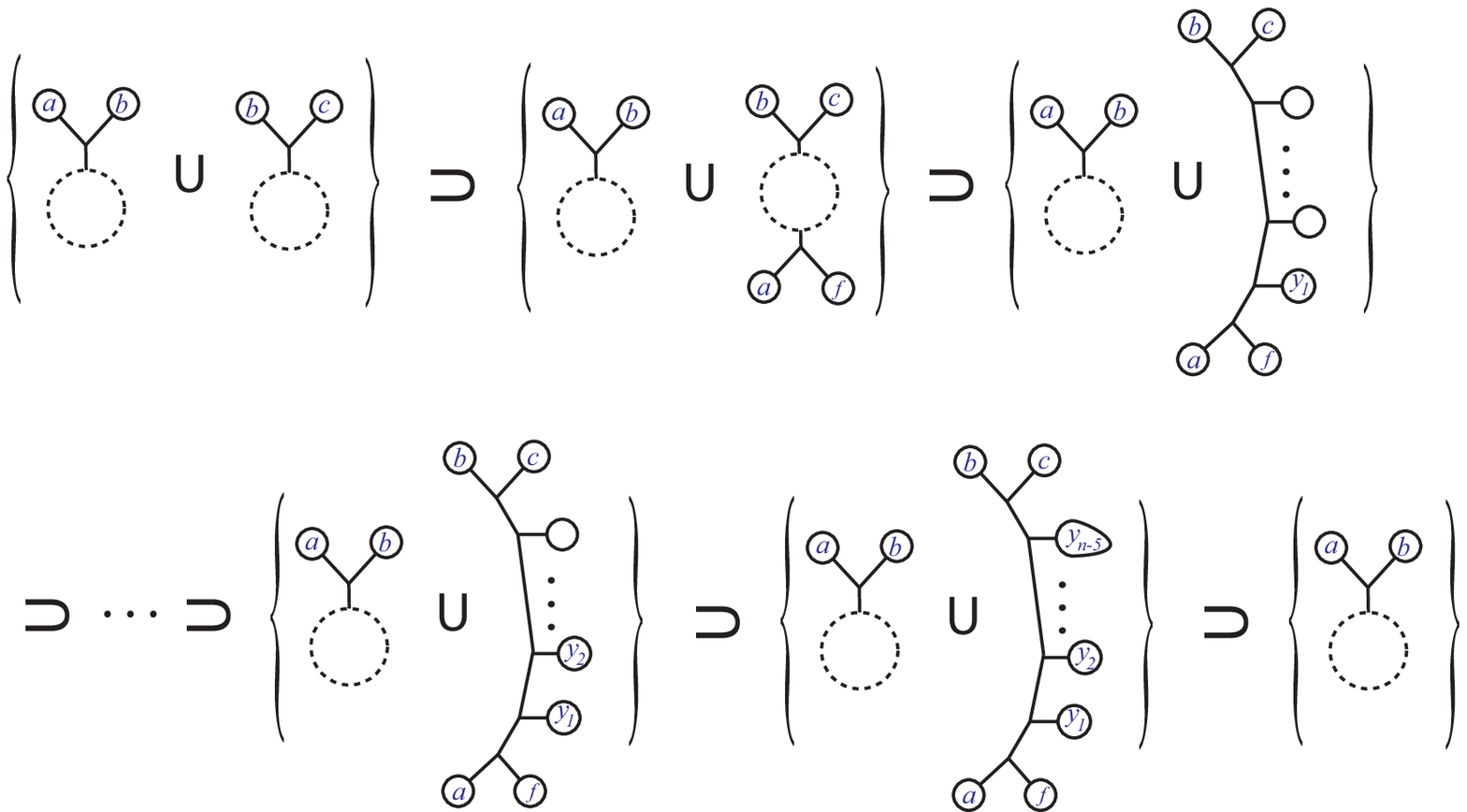}
\caption{Each set is the collection of trees (vertices of
$\mathcal{P}_n$) that have any placement of the remaining labels
from $S$ into the blank leaves shown. The containment of sets also
shows the flag of our facet $P_{abc}$. The containment is
$P_{abc}\supset P_{abc,f}\supset P^1_{abc,f}\supset
P^2_{abc,f}\supset\dots\supset P^{n-5}_{abc,f}\supset P_{ab}.$
}\label{f:int_flag}
\end{figure}
\end{proof}

The facet inequalities we describe in the above proof are equivalent to
the \emph{triangular inequalities} of Proposition 3 in \cite{Catan}.
This connection does raise the question of whether other
inequalities in that paper can lead to facets of the BME polytope.

 Note that the dimension of a general face corresponding to an
intersecting pair of cherries is greater than the dimension of the
clade-face for the clade that is one of those cherries. We
conjecture based on initial experiments (verified by polymake for
$n$=6) that in fact the dimension is ${n \choose 2}-n-1,$ implying
that these intersecting-cherry faces are indeed facets. We also
leave for the future the investigation of other sorts of
intersecting sets of clades: we conjecture that two or three or more
clades, of various sizes and tree geometry, intersecting in various
ways, will lead to further faces and facets of $\mathcal{P}_n$.
%
%

\section{Facets from free cyclic orderings.}

A free circular permutation (or free cyclic ordering, or necklace)
of the elements of $S$ is only distinguished by which elements are
adjacent. It is an arrangement of the elements of $S$ around a
circle, which may be rotated or flipped. We are interested in the
binary trees on $S$ which are coplanar with a certain free cyclic
ordering on $S.$ That is, having drawn one of the two planar
versions of the cyclic ordering, we can then draw the tree in the
same plane, as in Figure~\ref{f:necklace_simplex}. The number of
trees coplanar with the free cyclic ordering is found by a simple
counting argument for $n=5:$ there are 5 choices for the first
cherry and the two for the second cherry; but then we divide by two
since the order we choose the cherries in is irrelevant, giving us
five total trees.

\begin{theorem}\label{th:neck}
For each free cyclic ordering $\mathcal{N}$ on $S$ with $|S| = 5$
there is a facet of $\mathcal{P}_5$ that is equivalent to a
4-simplex, whose five vertices correspond to trees that are coplanar
with the free cyclic ordering.
\end{theorem}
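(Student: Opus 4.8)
The plan is to mirror the structure of the intersecting-cherry proof: first exhibit a supporting inequality that is tight exactly on the five coplanar trees, then confirm the face is a $4$-simplex by checking it has exactly five vertices in general position (equivalently, that the five vertices are affinely independent in the $5$-dimensional ambient space of $\mathcal{P}_5$). The cyclic-ordering inequality is already displayed in Table~\ref{facts}, namely $x_{ab}+x_{bc}+x_{cd}+x_{df}+x_{fa}\le 13$, where $(a,b,c,d,f)$ is a cyclic labelling of the five leaves according to $\mathcal{N}$. So the first step is to verify that each of the five coplanar trees achieves equality in this sum of ``consecutive-pair'' coordinates, while every other vertex of $\mathcal{P}_5$ (a tree not coplanar with $\mathcal{N}$) gives a strictly smaller value.

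First I would set up coordinates. For $n=5$ the scaling factor is $2^{n-2}=8$, so $x_{ij}=2^{3-l}$ where $l$ is the number of internal nodes on the path from $i$ to $j$; the possible values are $x_{ij}\in\{1,2,4\}$ (corresponding to $l=3,2,1$, i.e.\ a cherry gives $x_{ij}=4$, etc.). A tree coplanar with $\mathcal{N}$ has its two cherries made of two disjoint adjacent pairs in the necklace, so exactly two of the five consecutive pairs $\{a,b\},\{b,c\},\{c,d\},\{d,f\},\{f,a\}$ are cherries (value $4$ each) and the remaining three consecutive pairs each have path-length $l=2$ (value $2$ each); this gives $4+4+2+2+1=13$ once one checks the fifth consecutive pair. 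I would compute these five coordinate values explicitly for one representative coplanar tree and then note that the cyclic symmetry of $\mathcal{N}$ permutes the five coplanar trees and the five summands simultaneously, so the total is the same constant $13$ for all five. For the strict inequality I would argue that any non-coplanar tree forces at least one consecutive pair $\{i,i{+}1\}$ to be ``split'' by a crossing relative to $\mathcal{N}$, dropping its contribution and pushing the sum strictly below $13$; here a short case check over the ten vertices not in the facet (or equivalently over nearest-neighbor-interchange moves away from a coplanar tree) suffices since $n=5$ is small.

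Having established that the face $F$ consists of exactly these five vertices, I would then show $F$ is a $4$-simplex. The clean way is to verify the five vertex vectors are affinely independent: since $\mathcal{P}_5$ is $5$-dimensional, five affinely independent vertices span a $4$-dimensional face, which is necessarily a simplex and of codimension one, hence a facet. Concretely I would select the relevant coordinates (the five consecutive-pair coordinates, or a convenient subset of the ten $x_{ij}$) and check that the $5\times 5$ matrix of differences from one base vertex has full rank $4$ after projecting out the affine constraint. Because all five vertices already saturate the single linear equation $x_{ab}+x_{bc}+x_{cd}+x_{df}+x_{fa}= 13$, the only remaining work is to see no further linear relation holds among them.

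The main obstacle I expect is the strict-inequality step: ruling out every non-coplanar tree cleanly, rather than by brute enumeration. A tree can fail to be coplanar with $\mathcal{N}$ in more than one way, and the coordinate $x_{ac}$-type ``diagonal'' terms interact, so I would want a uniform reason that at least one of the five consecutive-pair distances must exceed $l=2$ internal nodes (equivalently that value drops to $1$) whenever the tree is not coplanar. The affine-independence computation, by contrast, is routine linear algebra on small explicit integer vectors, and the equality-on-five-vertices step is a direct coordinate calculation using the cyclic symmetry of $\mathcal{N}$.
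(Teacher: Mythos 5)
Your proposal is correct, and its first half coincides with the paper's proof: both rest on the supporting inequality $x_{ab}+x_{bc}+x_{cd}+x_{df}+x_{fa}\le 13$, tight exactly on the five coplanar trees. Two points of comparison. First, the strict inequality that you flag as the main obstacle has a one-line uniform argument, which is what the paper uses: every binary tree on $5$ leaves has exactly two cherries, and a tree is coplanar with $\mathcal{N}$ precisely when both of its cherries are consecutive pairs of the necklace; hence a non-coplanar tree has at most one cherry among the five summands, so the sum is at most $4+2+2+2+2=12<13$. No enumeration over the ten excluded vertices is needed. (Also a small slip: for a coplanar tree the three non-cherry consecutive pairs take values $2,2,1$, not $2,2,2$; your displayed total $4+4+2+2+1=13$ is the correct one.) Second, your method for establishing that the face is a facet and a simplex genuinely differs from the paper's: you would check affine independence of the five vertex vectors directly, so that five affinely independent points in the $5$-dimensional ambient space immediately give a $4$-dimensional face of codimension one which is a simplex, whereas the paper exhibits a flag of subfaces of length $5$ (vertex $\subset$ edge $\subset$ triangle $\subset$ $3$-simplex $\subset$ face), each step cut out by an auxiliary supporting equality such as $x_{bf}=1$ and $x_{bf}+x_{bd}+x_{ac}=4$. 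Your route is more direct for this small concrete case and is routine linear algebra; the paper's flag technique has the advantage of being the same machinery it deploys for the intersecting-cherry and caterpillar facets (Theorems~\ref{th:gen_cherry} and~\ref{th:cat}), where the facets have many more vertices and a rank computation would be less practical.
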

\begin{proof}
The trees which are coplanar with $\mathcal{N} = (a,b,c,d,f)$
satisfy $x_{ab}+x_{bc}+x_{cd}+x_{df}+x_{fa}= 13.$ That is because
two cherries are represented by those components, and the remaining
components are assigned values 2, 2 and 1 respectively.  All other
trees in $\mathcal{P}_5$, not coplanar with $\mathcal{N}$, obey
$x_{ab}+x_{bc}+x_{cd}+x_{df}+x_{fa}< 13.$ That is because at most
one cherry can be among those components, and the rest of the
components then can at most be assigned the value 2. Thus the
components add up to at most 12.

Thus our five trees constitute the vertices of a face of
$\mathcal{P}_5$. We show that this is a facet, with dimension equal
to 4, by establishing within it a flag of length 5. We can proceed
starting with any vertex and edge, since any pair of vertices have
an edge between them. Our flag chosen for the purposes of this proof
is shown in Figure~\ref{f:necklace_facet_flag}, left to right with
the vertex and edge first. The set of three vertices makes a
triangular face of the facet since they obey the equality $x_{bf}=1$
while the other two vertices have $x_{bf}>1.$ The set of four
vertices make a 3-simplex since they all obey the equality
$x_{bf}+x_{bd}+x_{ac} = 4,$ while the final vertex has
$x_{bf}+x_{bd}+x_{ac} >4.$

\begin{figure}[b]\centering
                  \includegraphics[width=\textwidth]{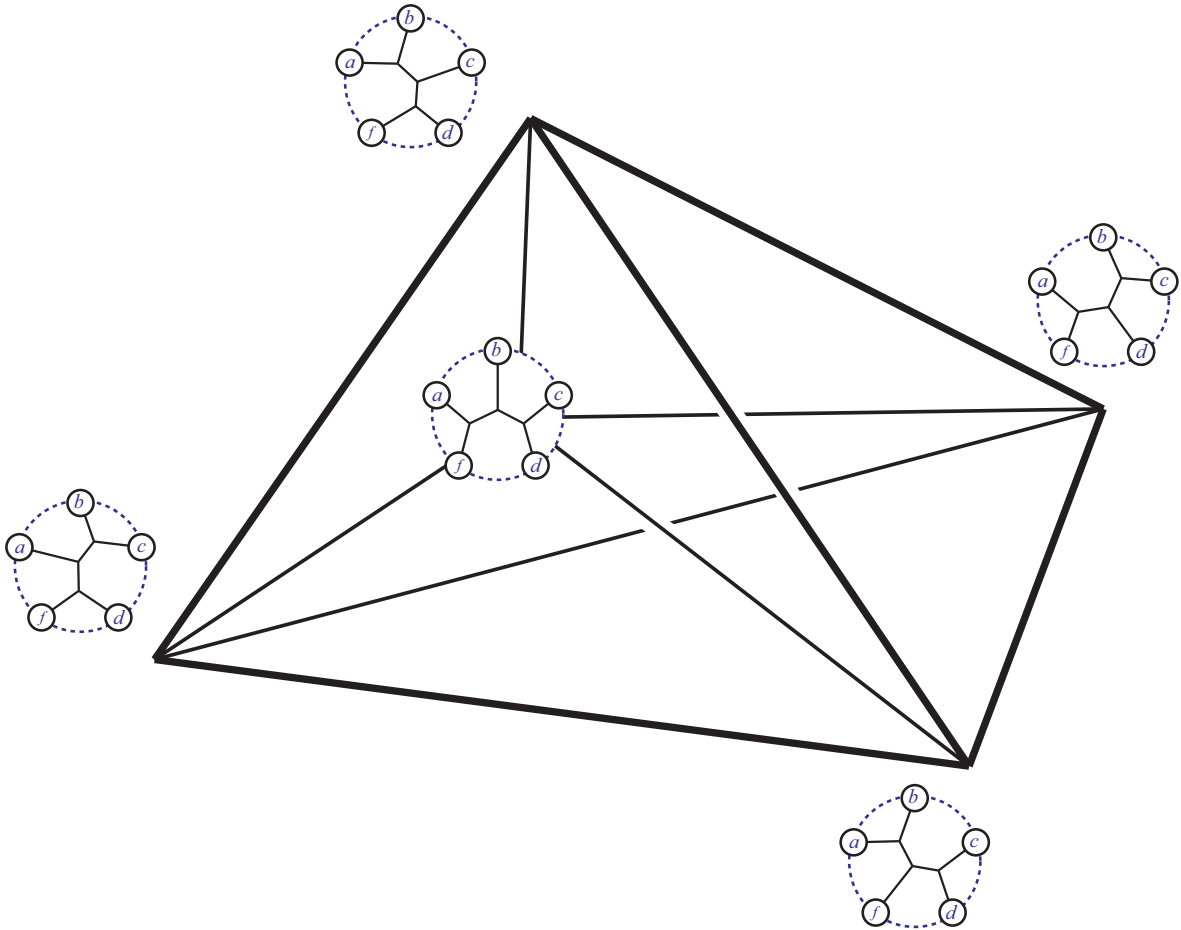}
\caption{A generic facet of $\mathcal{P}_5$ based on a free cyclic
ordering $(a,b,c,d,f)$.}\label{f:necklace_simplex}
\end{figure}

\begin{figure}[b]\centering
                  \includegraphics[width=\textwidth]{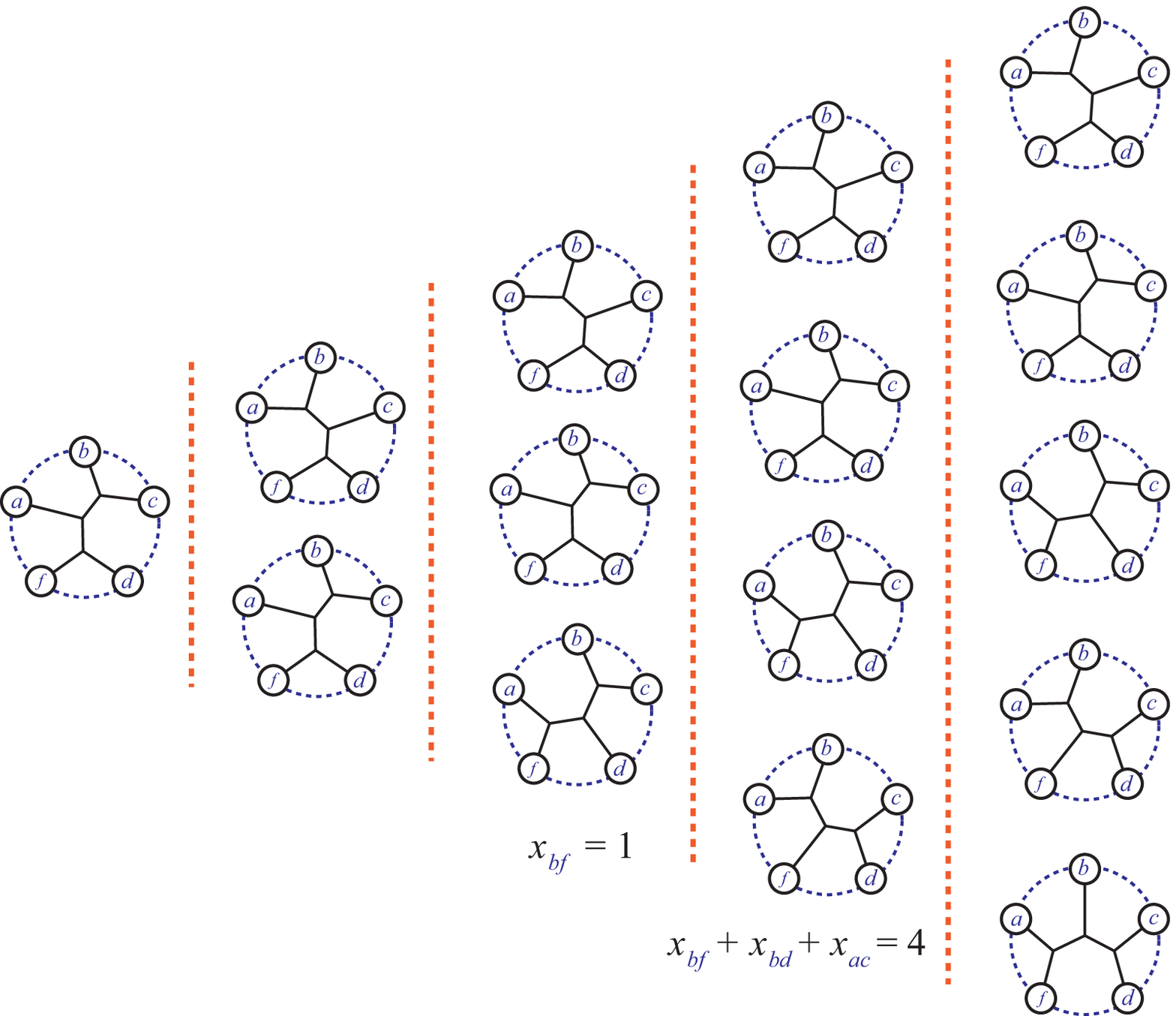}
\caption{Left to right these columns show the sets of trees in faces
that form a flag of a facet of $\mathcal{P}_5$ based on a free
cyclic ordering $(a,b,c,d,f)$.}\label{f:necklace_facet_flag}
\end{figure}
\end{proof}

The number of free cyclic orderings on 5 objects is $4!\slash 2 =
12.$ Via polymake we see that there are exactly 12 facets of
$\mathcal{P}_5$ that have 5 vertices. Thus we have accounted for all
of these facets with free cyclic orderings.


%
%

\section{Facets from Caterpillars.}

The third type of facet for $\mathcal{P}_5$ corresponds to a choice
of two elements of $S.$ These are placed as leaves on a tree that
are as far apart as possible: in this case a distance of 3 internal
nodes on a binary caterpillar. Thus there are six ways to place the
remaining three elements of $S$ as the other three leaves, and the
\emph{caterpillar facet} has 6 vertices.

\begin{theorem}
Each pair of elements ${a,b}$ from $S$ with $|S|=5$ determines a
facet of $\mathcal{P}_5$ whose vertices are trees that have  $a$ and
$b$ as leaves of distinct cherries.
\end{theorem}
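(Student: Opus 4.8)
The plan is to follow the template of Theorems~\ref{th:cherry} and \ref{th:neck}: first produce the supporting inequality and check that the six named trees are exactly the vertices meeting it with equality, and then verify that the resulting face has dimension $4=\dim\mathcal{P}_5-1$. The supporting inequality here is the simplest one on the list, namely $x_{ab}\ge 1$. Writing $x_{ab}=2^{\,n-2-l}=2^{\,3-l}$, where $l$ is the number of internal nodes on the path from $a$ to $b$, and using $l\le n-2=3$ for every tree on five leaves, I get $x_{ab}\ge 1$ at every vertex of $\mathcal{P}_5$. Equality forces $l=3$, the maximal separation, which occurs exactly when $a$ and $b$ lie in the two distinct end-cherries of the caterpillar: if $a$ and $b$ shared a cherry then $l=1$ and $x_{ab}=4$, while if one of them were the middle leaf then $l=2$ and $x_{ab}=2$. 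Thus the face $Q_{ab}$ cut out by $x_{ab}=1$ consists precisely of the trees in which $a$ and $b$ are leaves of distinct cherries, and there are six of them: after fixing $a$ and $b$ at opposite ends, the remaining three leaves $c,d,f$ split into the partner of $a$ ($3$ choices), the partner of $b$ ($2$ choices), and the forced middle leaf.

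It then remains to show $\dim Q_{ab}=4$. As in Theorem~\ref{th:neck} I would exhibit a flag $Q_{ab}=F_4\supset F_3\supset F_2\supset F_1\supset F_0$ with $\dim F_i=i$, which forces $\dim Q_{ab}\ge 4$; since $Q_{ab}$ is a proper face of the $5$-dimensional $\mathcal{P}_5$ this yields equality, and hence the facet claim. I would start at any vertex $F_0$ together with any incident edge $F_1$ (every pair of the six vertices is in fact joined by an edge, though one incident edge is all that is needed), and then enlarge the selected set one vertex at a time, at each stage exhibiting a linear functional that is constant on the chosen trees and strictly smaller on the remaining vertices of $F_{i+1}$. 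A convenient first step is the functional $x_{af}+x_{bf}$, whose maximum value $5$ over $Q_{ab}$ is attained exactly on the four trees in which $f$ is not the middle leaf; these form the $3$-face $F_3$. Iterating with analogous combinations that record which leaf partners $a$ or $b$ then produces $F_2$ (three trees) and $F_1$ (two trees).

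The only genuine work is this dimension count, and its single subtlety is worth flagging. Unlike the cyclic-ordering facet of Theorem~\ref{th:neck}, which is a simplex whose flag steps are each cut out by one coordinate or a very short sum, the caterpillar facet is combinatorially the Birkhoff polytope $B_3$ (the content of Theorem~\ref{th:bcat}). Its $1$-skeleton is complete on the six vertices, yet its proper faces are not all coordinate slices: over the four vertices of $F_3$, every coordinate $x_{ij}$ is extremized on one or two of them and never on exactly three, so no single coordinate carves out a triangular $2$-face, and some flag steps must instead be cut by nontrivial linear combinations of several coordinates---exactly as the step $x_{bf}+x_{bd}+x_{ac}=4$ functions in Theorem~\ref{th:neck}. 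The main obstacle is therefore the bookkeeping: choosing, at each of the four steps, an explicit functional that is constant on the selected trees and strictly dominated on the rest, and verifying these relations against the ten coordinates of the six trees. A cleaner alternative, should the flag prove unwieldy, is to bypass it entirely and check that five of the six vertex vectors are affinely independent---equivalently, that the four difference vectors from a fixed base vertex have rank $4$---which at once gives $\dim Q_{ab}\ge 4$.
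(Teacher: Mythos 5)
Your proposal is correct, but it takes a genuinely different route from the paper's. For this statement the paper gives only a two-line proof: it records the supporting equality $x_{ab}=1$ (versus $x_{ab}>1$ for every other tree) and then defers entirely to the general Theorem~\ref{th:cat}, whose inductive construction (the chain $P^n_{12}\supset P^n_{12,3}\supset P^n_{12,34}\supset\cdots\supset P^n_{123}$ plus a projection onto $P^{n-1}_{12}$) establishes the facet property for all $n$ at once. You instead give a self-contained $n=5$ argument in the style the paper uses for Theorems~\ref{th:cherry} and~\ref{th:neck}: the same inequality $x_{ab}\ge 1$, the correct identification of the six equality trees, and then a direct dimension count inside the face. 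Your explicit claims do check out: $x_{af}+x_{bf}$ equals $5$ on the four trees where $f$ is not the middle leaf and $4$ on the two where it is, so it cuts out your $F_3$; on those four vertices no single coordinate is extremized on exactly three of them (each is extremized on one or two), so your warning that a combined functional is needed for the triangle step is accurate--for instance $2x_{ad}+x_{af}+x_{bd}$ attains its maximum value $10$ on exactly three of the four, after which $x_{af}$ and then $x_{bd}$ finish the flag. Your fallback also works: taking the tree with cherries $\{a,c\},\{b,f\}$ as base vertex, the difference vectors to four of the remaining five vertices have rank $4$, so five of the six vertices are affinely independent and $\dim Q_{ab}=4$ follows. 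As for what each approach buys: the paper's citation gets the $n=5$ case for free from a theorem valid in every dimension, whereas your argument is elementary, self-contained, and yields completely explicit facet-defining data; the trade-off is that neither your flag nor the rank computation generalizes as stated, since for arbitrary $n$ one would face a rank-$\bigl({n \choose 2}-n-1\bigr)$ verification over $(n-2)!$ vertices--precisely the work the paper's induction is designed to replace.
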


\begin{proof} The result follows from the general Theorem~\ref{th:cat} which establishes the fact for all
dimensions. Specifically, each tree that has the elements $a$ and
$b$ separated by 3 internal nodes has corresponding vector that
obeys $x_{ab} =1.$ All other trees, which do not have this property,
obey $x_{ab}>1.$ The flag of length 5 which establishes that the
face in question is indeed a facet is described inductively in the
proof of Theorem~\ref{th:cat}.
\end{proof}

The number of these facets in $\mathcal{P}_5$ is ${5 \choose 2}
=10.$ Note that now we have described $30+12+10=52$ facets of
$\mathcal{P}_5,$ the total number predicted by polymake. The three
classes of facets do not intersect for $n=5$: that is, a caterpillar
facet cannot be an intersecting cherry facet (nor vice-versa), since
the collection of caterpillar trees is determined by choosing two
leaves to be as far apart as possible, while the intersecting cherry
trees allow any two leaves to be closer than the maximum distance.

\begin{theorem}\label{th:bcat}
The caterpillar facets of $\mathcal{P}_5$ are combinatorially
equivalent to the Birkhoff polytope of dimension 4.
\end{theorem}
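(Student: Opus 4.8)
The plan is to mirror the strategy used for Theorem~\ref{th:bcher}, exploiting the fact that a caterpillar facet and the target polytope already share their crudest invariants. The $4$-dimensional Birkhoff polytope is $B(3)$, the convex hull of the $3!=6$ permutation matrices of order $3$; by the formula $(n-1)^2$ it has dimension $4$, and it has exactly $6$ vertices. By the preceding caterpillar theorem the facet determined by a pair $\{a,b\}$ likewise has dimension $4$ and exactly $6$ vertices, so the two polytopes agree in dimension and vertex count before any work is done. This matching of numerics is what makes $B(3)$ the natural candidate and parallels exactly the situation of the intersecting-cherry facets.

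The first substantive step is to set up a canonical bijection between the two vertex sets. Fix the pair $\{a,b\}$ and write $\{c,d,e\}=S\setminus\{a,b\}$. Each vertex tree of the caterpillar facet assigns $c,d,e$ to three \emph{roles}: the leaf sharing a cherry with $a$, the central leaf, and the leaf sharing a cherry with $b$. Such an assignment is precisely a bijection from $\{c,d,e\}$ to the three roles, that is, a permutation matrix of order $3$ (rows indexed by $\{c,d,e\}$, columns by the three roles). This role-assignment map is a clean bijection onto the vertices of $B(3)$, and I would take it as the candidate combinatorial isomorphism. As in Theorem~\ref{th:bcher}, the reduction to a single facet is immediate: all ten caterpillar facets are carried onto one another by the coordinate permutations induced by relabelings of $S$ that fix the distinguished pair $\{a,b\}$, so verifying one facet suffices.

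The genuinely hard step is to promote this vertex bijection to an isomorphism of face lattices, equivalently to show that the vertex--facet incidences of the caterpillar facet coincide with those of $B(3)$, whose $f$-vector is $(6,15,18,9)$ and which is therefore $2$-neighborly. Following the proof of Theorem~\ref{th:bcher}, the direct route is to compute the vertex--facet incidence matrix of one caterpillar facet in polymake and match it, under the role bijection, against that of $B(3)$; since combinatorial equivalence is exactly preservation of this incidence structure, a successful match finishes the argument. The main obstacle is to make this matching human-checkable rather than purely machine-verified: one must exhibit, for each facet-defining inequality of the caterpillar facet, the subset of role-assignments that saturate it and confirm that these subsets are precisely the vertex sets of the nine facets of $B(3)$. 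I expect the cleanest path is first to verify $2$-neighborliness directly—every pair among the six caterpillar trees spans an edge—and then to identify the nine facets one at a time; the bookkeeping in this final identification is the crux of the proof.
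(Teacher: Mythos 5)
Your proposal takes essentially the same route as the paper: the paper's proof is exactly a polymake verification that a vertex bijection preserves vertex--facet incidence, a symmetry reduction noting that one facet suffices to check, and the same role-assignment bijection you describe (displayed in Figure~\ref{f:birkhoff-iso}, where the permutation matrix for $\sigma$ corresponds to the caterpillar with leaves $3,4,5$ in the order $\sigma(3),\sigma(4),\sigma(5)$); your extra discussion of making the incidence match human-checkable is left as unfinished bookkeeping, just as the paper leaves it to the machine and the Schlegel diagrams. One small correction: the coordinate permutations carrying one caterpillar facet onto another come from relabelings of $S$ that \emph{move} the distinguished pair $\{a,b\}$ to the other pair, not those fixing it (the latter merely stabilize a single facet).
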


\begin{proof}
This fact was first verified by polymake, where the isomorphism of
vertices may be seen as preserving vertex-facet incidence. Since all
the caterpillar facets are combinatorially equivalent by a common
permutation of the coordinates, checking one is sufficient for
checking all. We illustrate the isomorphism by showing the two
Schlegel diagrams in Figure~\ref{f:birkhoff-iso}.
\end{proof}

\begin{figure}[b]\centering
                  \includegraphics[width=4.5in]{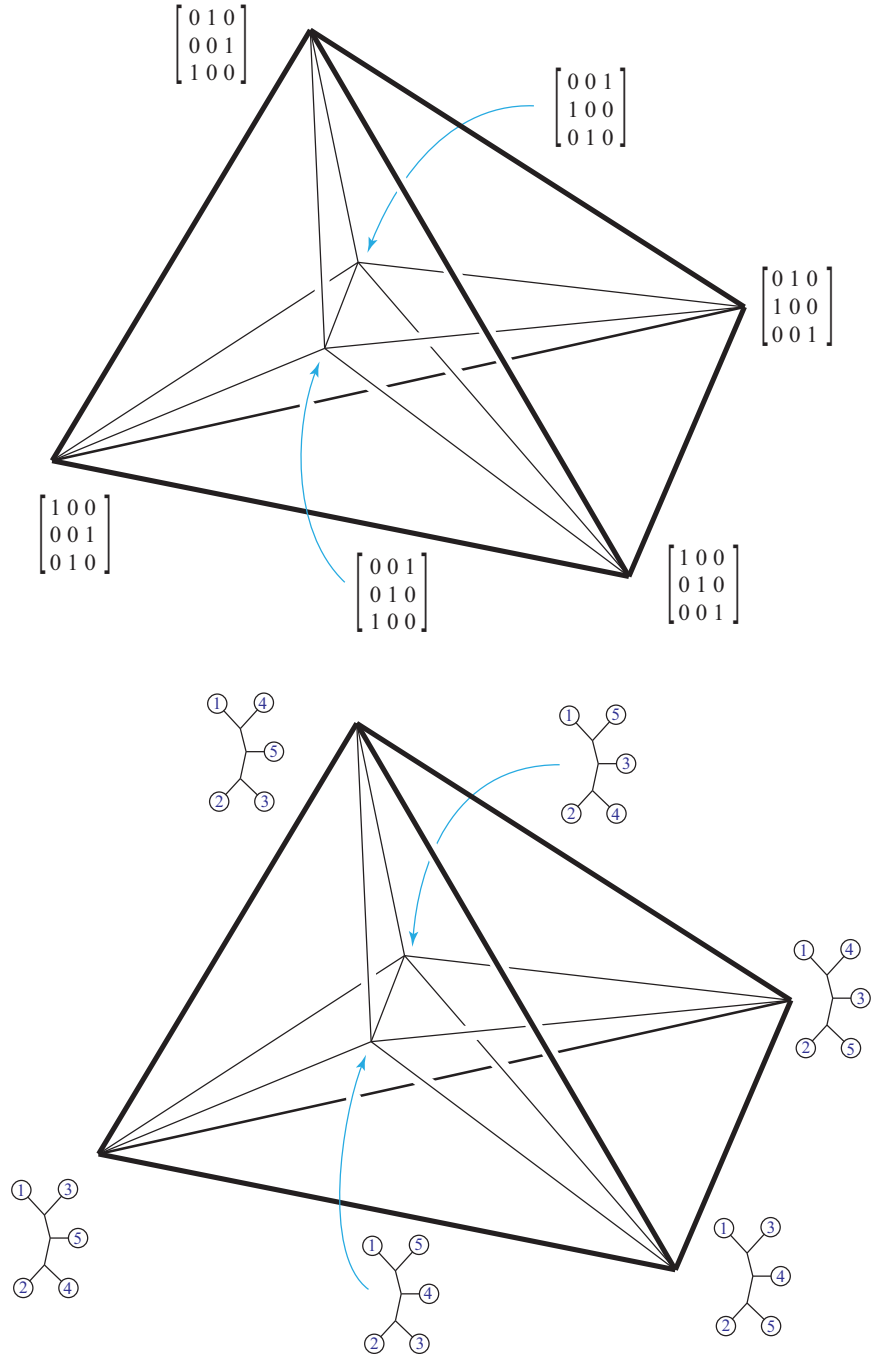}
\caption{On the top is the Birkhoff polytope $B(3)$ with vertices
labeled by the permutation matrices. On the bottom is a facet of
$\mathcal{P}_5$ with each vertex labeled by the tree corresponding
to the permutation matrix in the corresponding
position.}\label{f:birkhoff-iso}
\end{figure}

The generalization of this caterpillar facet for any size set $S$
has vertices any collection of trees with leaves $S$ that are all
binary caterpillars, with a pair of chosen species as the two which
must reside as far apart as possible--as leaves of the only two
distinct cherries. These faces are indeed facets of $\mathcal{P}_n,$
each with $(n-2)!$ vertices. In general they are not equivalent to
the Birkhoff polytope $B(n-2)$, since for $n>5$ the facets of
$\mathcal{P}_n$ have a dimension ${n \choose 2} -n-1$ which is
greater than $(n-3)^2$ (the dimension of $B(n-2).$)  However an
interesting projection is suggested by the 4-dimensional case in
Figure~\ref{f:birkhoff-iso}, where a permutation matrix
corresponding to permutation $\sigma$ is mapped to the tree with
leaves 3,4, and 5 in the order $\sigma(3), \sigma(4), \sigma(5)$. We
leave as an open question, for instance, whether this map in the
general case gives rise to a cellular projection to the Birkhoff
polytope from facets of the balanced minimal evolution polytope.

\begin{theorem}\label{th:cat}
Consider the set of binary caterpillar trees on $n$ leaves $S$ with
a given pair from the set $S$ as maximally separated leaves. The
vertices of $\mathcal{P}_n$ calculated from these caterpillar trees
are the vertices of a facet of $\mathcal{P}_n$.
\end{theorem}

Note that the number of these facets in $\mathcal{P}_n$ is ${n
\choose 2}$ for $n>4.$ For $n=4$ there are half that many, the three
edges of the triangle in Figure~\ref{f:2dbmes}, since having chosen
two elements of $S$ to be placed in the distinct cherries we
automatically determine the other two elements which will also be
placed in distinct cherries: in the notation of the proof that
follows we have for instance that $P^4_{12} = P^4_{34}.$

\begin{proof} of Theorem~\ref{th:cat}
For the purposes of this proof we choose the set $S = [n]$ and
without loss of generality we let the two fixed leaves with maximal
distance $n-2$ between them be the leaves labeled 1 and 2. We'll
continue by choosing leaves in counting order: this will be without
loss of generality since any other selection of leaves is covered by
choosing an appropriate ordering.

Thus the caterpillar trees with fixed leaves 1 and 2 obey $x_{12} =
1$, and all other points in $\mathcal{P}_n$ obey $x_{12}>1.$  We
call this face $P^n_{12}.$ Next we use induction on $n$, the number
of leaves, to show that the face $P^n_{12}$ is in fact a facet, of
dimension ${n \choose 2} - n -1.$ The strategy is to show existence
of a flag of $\mathcal{P}_n$ beginning with $P^n_{12}$ and ending
with a single vertex, which has total length ${n \choose 2} - n.$ We
start with a chain of sub-faces of $P^n_{12}$ which has length
$n-2$, including $P^n_{12}$ itself. Then we show a final sub-face
which has the same dimension as $P^{n-1}_{12}$, thus inductively of
dimension ${n-1 \choose 2}-(n-1)-1.$ Thus the entire flag is of
length ${n-1 \choose 2}-(n-1) + n-2 = {n \choose 2}-n.$

The base case of our induction is $n=4.$ See Figure~\ref{f:2dbmes}
where each edge of the triangle is a facet of this type.
Specifically, the edge $P^4_{12}$ is at the bottom of the triangle.

After $P^n_{12}$ the largest face in our flag is the one whose
vertices are described as vertices of $P^n_{12}$ whose caterpillar
tree is one of two types, as seen in Figure~\ref{f:cat_flag}. The
tree has a third fixed leaf, say the leaf labeled 3, either in the
same cherry as the leaf 1; or as the leaf nearest that cherry but
not in it. We call this face $P^n_{12,3}$, and note that it contains
$2((n-3)!)$ vertices. To see that it is indeed a face, we show that
its vertices obey the equation
$$x_{13}+2^{n-4}\left(\sum_{i=4}^n(x_{1i} - x_{3i})\right)=2^{n-3}.$$
...and that all other vertices in $P^n_{12}$ obey the inequality:
$$x_{13}+2^{n-4}\left(\sum_{i=4}^n(x_{1i} - x_{3i})\right)>2^{n-3}.$$

First, the vertices of $P^n_{12}$ whose caterpillar tree has the
leaf labeled 3 in the same cherry as the leaf 1: for these the
difference $x_{1i}-x_{3i} =0$ for each $i,$ while $x_{13} =
2^{n-3}$.  For the vertices that have leaf 3 as the leaf nearest the
cherry containing leaf 1, but not in it: $x_{13} = 2^{n-4}$ and the
sum of differences telescopes and simplifies to equal $2^{n-n} =1.$
The equality holds since $2(2^{n-4}) = 2^{n-3}.$

Any other leaf of $P^n_{12}$ not in $P^n_{12,3}$ has leaf 3 even
further from the cherry containing leaf 1. Now the sum of
differences will telescope and simplify to become $1+2+\dots+2^j$
where $j$ is the number of leaves further (than 1) from the cherry
that leaf 3 is found. Since the latter sum is larger than 2, the
left side of our inequality is greater than $2^{n-3}.$

Next we describe a sequence of $n-4$ nested faces (of steadily
smaller dimension) labeled $P^n_{12,34}, P^n_{12,345}, \dots ,
P^n_{12,345\dots k}$ for $k=4\dots n-1.$  The vertices of
$P^n_{12,34}$ (the first in this series, with largest dimension) are
vertices of $P^n_{12,3}$ which either have leaf 3 in the cherry with
leaf 1, or have leaf 4 in the cherry with leaf 1. After that, for
$k>4$ the vertices of $P^n_{12,34\dots k}$ are vertices of
$P^n_{12,34\dots (k-1)}$ with either leaf 3 in the cherry with leaf
1 or leaf 4 in the cherry with leaf 1 and leaves $5\dots k$ in that
order immediately on the other side of leaf 3. See
Figure~\ref{f:cat_flag}.

First we show that the vertices of $P^n_{12,345\dots k}$ obey the
equality:
$$2^{n-k}x_{13}+2^{n-4}\left(\sum_{i=4}^k(x_{1i} - x_{3i})\right)=2^{n-3}2^{n-k}.$$
Consider the vertices of $P^n_{12,345\dots k}$ whose caterpillar
tree has the leaf labeled 3 in the same cherry as the leaf 1: for
these the difference $x_{1i}-x_{3i} =0$ for each $i,$ while $x_{13}
= 2^{n-3}$.  For the vertices that have leaf 3 as the leaf nearest
the cherry containing leaf 1, but not in it: $x_{13} = 2^{n-4}$ and
the sum of differences telescopes and simplifies to equal $2^{n-k}.$
The equality holds since $2(2^{n-4}) = 2^{n-3}.$

To check for the needed inequalities we begin with $k=4.$ We show
that the vertices of $P^n_{12,3}$ which are not in $P^n_{12,34}$
obey the inequality:
$$2^{n-k}x_{13}+2^{n-4}\left(\sum_{i=4}^k(x_{1i} - x_{3i})\right)<2^{n-3}2^{n-k}.$$
For $k=4$, and since these trees have leaf 3 as the leaf nearest the
cherry containing leaf 1, but not in it, and leaf 4 also not in that
cherry, this inequality becomes:
$$2^{n-4}2^{n-4}+2^{n-4}\left(x_{14} - x_{34}\right)<
2^{n-3}2^{n-4}.$$ This inequality holds since $x_{14} - x_{34}<0,$
and so $2^{n-4} + (x_{14} - x_{34}) < 2^{n-4} < 2^{n-3},$ which
leads to the desired inequality.

Now, for $k>4,$ we need to show that the vertices of $P^n_{12,3\dots
k-1}$ which are not in $P^n_{12,3\dots k}$ obey the inequality:
$$2^{n-k}x_{13}+2^{n-4}\left(\sum_{i=4}^k(x_{1i} - x_{3i})\right)>2^{n-3}2^{n-k}.$$
Since these trees have leaf 3 as the leaf nearest the cherry
containing leaves 1 and 4, this inequality becomes:
$$2^{n-k}2^{n-4}+2^{n-4}\left(\sum_{i=4}^k(x_{1i} - x_{3i})\right)>2^{n-3}2^{n-k}.$$
Since leaf $k$ is at a position farther from leaf 1 than if the tree
was in $P^n_{12,3\dots k},$ then the sum of differences telescopes
and simplifies to $2^{n-k+1}-2^x$ where $x < n-k.$ Thus $x-(n-k) <
0$ and it is clear that $1+2-2^{x-(n-k)} >2.$ Therefore:
$$2^{n-k}2^{n-4}+2^{n-4}2^{n-k}(2-2^{x-(n-k)})>2^{n-3}2^{n-k},$$
which is the simplified inequality we needed to show.

Finally we reach the subface called $P^n_{123},$ which consists of
the vertices of $P^n_{12,345\dots n-1}$ which have leaf 3 in the
same cherry as leaf 1. That of course means they are only of the
first type. They constitute a subface, since they obey the
additional equality $x_{13} = 2^{n-3},$ while the other vertex of
$P^n_{12,345\dots n-1}$ obeys $x_{13} = 2^{n-4}<2^{n-3}.$ We check
that this face $P^n_{123}$ projects to $P^{n-1}_{12},$ which as
discussed above will give us the correct number of remaining faces
of our flag, by induction. The linear projection is described by its
action on the ${n \choose 2}$ coordinates, yielding ${n-1 \choose 2
}$ new ones; thus it is given by an ${n-1 \choose 2}\times{n \choose
2 }$ matrix $A$, which has rows and columns labeled by the
respective coordinates in lexicographic order.

First each coordinate involving leaf 3 (so $x_{i3}$ or $x_{3i}$) is
discarded. That means the columns of $A$ corresponding to these
coordinates are made up of zeroes. Second, the coordinates that
involve leaf 1 (so $x_{i1}$ or $x_{1i}$) are multiplied by 1, but
sent to the new coordinates with the same label if $i=2,$ or to the
coordinates  $x_{(i-1)1}$ or $x_{1(i-1)}$ respectively if $i\ge 4.$
Thus the columns of $A$ corresponding to these coordinates have a
single entry of 1 in the row corresponding to the new coordinate.
Finally the coordinates $x_{ij}$ involving neither leaf 1 nor leaf 3
are multiplied by $1\slash 2$ and sent to the new coordinate
$x_{i'j'}$ where $i' = i-1$ for $i>2$ and $i'=i$ for $i = 2$; and
likewise for $j'.$ Thus the columns of $A$ corresponding to these
coordinates have a single entry of $1\slash 2$ in the row
corresponding to the new coordinate. For instance here is the matrix
$A$ for $n=5$; it takes the two sample vectors listed in
Figure~\ref{f:bme5data} (the two vertices of the lowest edge in
Figure~\ref{f:birkhoff-iso}) to the two vectors labeling the lower
edge of the triangle in Figure~\ref{f:2dbmes}.

$$A = \left[\begin{array}{cccccccccc}
1&0&0&0&0&0&0&0&0&0\\
0&0&1&0&0&0&0&0&0&0\\
0&0&0&1&0&0&0&0&0&0\\
0&0&0&0&0&1/2&0&0&0&0\\
0&0&0&0&0&0&1/2&0&0&0\\
0&0&0&0&0&0&0&0&0&1/2\\
\end{array}\right]
$$

We claim that the image of $P^{n}_{123}$ under $A$ is the polytope
$P^{n-1}_{1,2}.$ This follows from the fact that the projection $A$ induces a
1-1 and onto mapping between the vertices of the two polytopes. Since there are
$(n-3)!$ vertices of each polytope, the surjective property of the mapping
implies that it is a bijection. We can easily describe the preimage of a vertex
in $P^{n-1}_{12}:$ take its caterpillar tree, attach a new branch as close to
leaf 1 as possible, and give it leaf 3. Then add 1 to increment each of the
other leaves except for leaf 2. The resulting new tree has the coordinates
required. The coordinates involving leaf 3 will be discarded, so their value
can be ignored. The leaves in our new tree are all now one node further away
from leaf 1, but using the new total number of leaves $n$ this difference is
canceled. The coordinates involving neither leaf 1 nor leaf 3 are the same
except for the factor of 2.


\begin{figure}[b]\centering
                  \includegraphics[width=\textwidth]{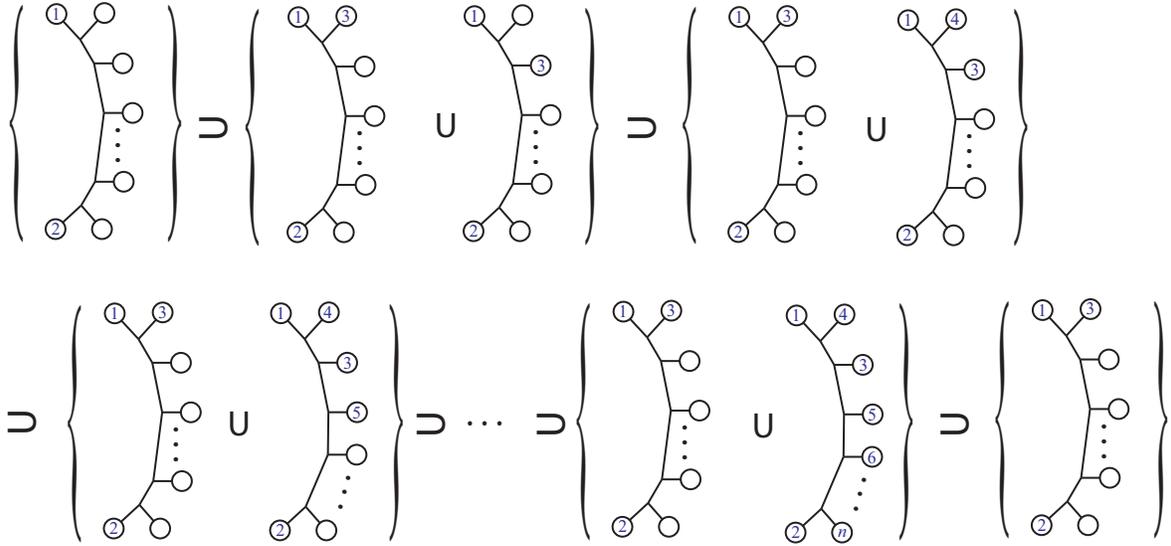}
\caption{Each set is the collection of trees (vertices of
$\mathcal{P}_n$) that have any placement of the remaining labels
from $S=[n]$ into the blank leaves shown. The containment of sets
also shows the flag of our facet $P^n_{12}$. The containment is
$P^n_{12}\supset P^n_{12,3}\supset P^n_{12,34}\supset
P^n_{12,345}\supset\dots\supset P^n_{12,345\dots n-1}\supset
P^n_{123}.$ Note that the next to last set also has leaf $n$ fixed
since there is only one place for it, and the last set (bottom
right) is the set of vertices of $\mathcal{P}_n$ which label a face
that projects to the facet $P^{n-1}_{12}.$}\label{f:cat_flag}
\end{figure}
\end{proof}

\begin{figure}[b]\centering
                  \includegraphics[width=\textwidth]{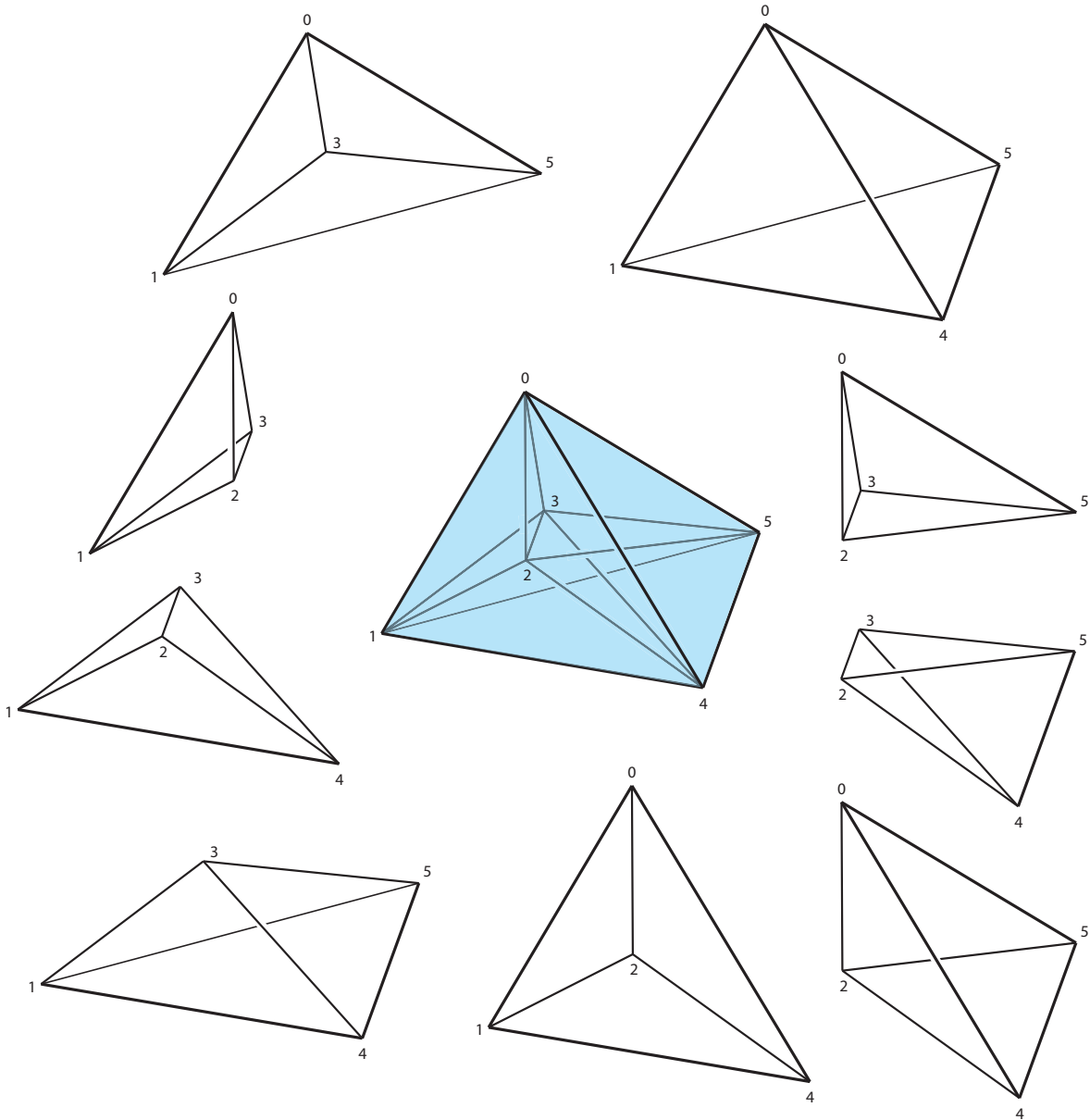}
\caption{Here we show the facets of the 4-dimensional Birkhoff
polytope. Each of the nine tetrahedra has its vertices labeled to
show where it fits in the central Schlegel diagram.}\label{f:expand}
\end{figure}

\section{Acknowledgements}
We would like to thank the referees, whose suggestions helped to improve the
readability of this paper. The first author would like to thank the organizers
and participants in the Working group for geometric approaches to phylogenetic
tree reconstructions, at the NSF/CBMS Conference on Mathematical Phylogeny held
at Winthrop University in June-July 2014. Especially helpful were conversations
with Ruriko Yoshida, Terrell Hodge and Matt Macauley. The first author would
also like to thank the American Mathematical Society and the Mathematical
Sciences Program of the National Security Agency for supporting this research
through grant H98230-14-0121.\footnote{This manuscript is submitted for
publication with the understanding that the United States Government is
authorized to reproduce and distribute reprints.} The first author's specific
position on the NSA is published in \cite{freedom}. Suffice it to say here that
he appreciates NSA funding for open research and education, but encourages
reformers of the NSA who are working to ensure that protections of civil
liberties keep pace with intelligence capabilities.

\bibliography{bme}{}
\bibliographystyle{alpha}

\end{document}